\documentclass[12pt]{article}


\usepackage{fullpage}
\usepackage{amsmath}
\usepackage{amssymb}
\usepackage{amsthm} 

\usepackage{enumerate}

\newcommand{\rn}{\ensuremath{\mathbb{R}^n}}
\newcommand{\rbar}{\ensuremath{\overline{\mathbb{R}}}}
\newcommand{\mabomega}{\ensuremath{M(\alpha, \beta, \Omega)}}
\newcommand{\mcdomega}{\ensuremath{M(c, d, \Omega)}}
\newcommand{\ltoo}{\ensuremath{L^2(\Omega)}}
\newcommand{\lr}{\ensuremath{L^r(\Omega)}}
\newcommand{\hoz}{\ensuremath{H^1_0(\Omega)}}
\newcommand{\borelmeas}{\ensuremath{\mathcal{B}(\Omega)}}
\newcommand{\linfty}{\ensuremath{L^\infty(\Omega)}}
\newcommand{\wlq}{\ensuremath{W_0^{1,q}(\Omega)}}
\newcommand{\hmo}{\ensuremath{H^{-1}(\Omega)}}

\newcommand{\ve}{\ensuremath{\varepsilon}}
\newcommand{\ueps}{\ensuremath{u_\varepsilon}}
\newcommand{\jeps}{\ensuremath{J_\varepsilon}}
\newcommand{\optcon}{\ensuremath{\theta_\varepsilon^*}}
\newcommand{\optstat}{\ensuremath{u_\varepsilon^*}}
\newcommand{\mueps}{\ensuremath{\mu_\varepsilon}}

\newcommand{\optmu}{\ensuremath{\mu_\varepsilon^*}}
\newcommand{\theps}{\ensuremath{\theta_\varepsilon}}
\newcommand{\allq}{\ensuremath{q\in \left[1,1^\star\right)}}
\newcommand{\allr}{\ensuremath{r\in \left[1,\frac{n}{n-2}\right)}}
\newcommand{\aeps}{\ensuremath{A_\varepsilon}}
\newcommand{\lambdaeps}{\ensuremath{\lambda_\varepsilon}}
\newcommand{\veps}{\ensuremath{v_\varepsilon}}
\newcommand{\beps}{\ensuremath{B\left(x,\frac{x}{\varepsilon}\right)}}
\newcommand{\peps}{\ensuremath{P\left(x,\frac{x}{\varepsilon}\right)}}
\newcommand{\Phieps}{\ensuremath{\Phi\left(x,\frac{x}{\varepsilon}\right)}}
\newcommand{\taueps}{\ensuremath{\tau\left(x,\frac{x}{\varepsilon}\right)}}
\newcommand{\aepsper}{\ensuremath{A\left(x,\frac{x}{\varepsilon}\right)}}
\newcommand{\bdry}{\ensuremath{\partial\Omega}}

\numberwithin{equation}{section}

\newtheorem{lemma}{Lemma}[section]
\newtheorem{prop}[lemma]{Proposition}
\newtheorem{thm}[lemma]{Theorem}
\newtheorem{cor}[lemma]{Corollary}
\newtheorem{conj}{Conjecture}
\newtheorem{defn}[lemma]{Definition}

\theoremstyle{definition}
\newtheorem{rmrk}[lemma]{Remark}

\title{Homogenization of Some Low-Cost Control Problems}
\author{Rajesh Mahadevan and T. Muthukumar\\ Facultad de Ciencias
F\'{i}sicas Y Matem\'{a}ticas,\\
Universidad de Concepci\'{o}n,\\ Concepci\'{o}n, Chile\\ e-mail: {\tt
[mthirumalai,rmahadevan]@udec.cl}}

\begin{document}
\maketitle

\begin{abstract}
The aim of this article is to study the asymptotic behaviour of
some low-cost control problems. These problems motivate the study
of $H$-convergence with weakly converging data. An improved lower
bound for the limit of energy functionals corresponding to weak
data is established, in the periodic case. This fact is used to
prove the $\Gamma$-convergence of a low-cost problem with
Dirichlet-type integral. Finally, we study the asymptotic
behaviour of a low-cost problem with controls converging to
measures.
\end{abstract}

{\bf Keywords:} homogenization, optimal control,
$\Gamma$-convergence, two-scale convergence, measure data

\vspace{3mm}
{\bf MSC (2000):} 35B27, 49J20

\section{Introduction}

Let $n\ge 1$ and let $\Omega$ be a bounded open subset of $\rn$. Let
$0<\alpha\le\beta$ be two given positive real constants. We denote
by $\mabomega$ the class of all $n\times n$ matrices $A$ with
entries in $L^\infty(\Omega)$, such that,
\[
\alpha|\xi|^2 \leq A(x)\xi\cdot\xi \leq \beta|\xi|^2 \quad \text{ a.e.
in } x, \quad \forall \xi=(\xi_i)_{i=1}^n \in \rn.
\]
Let $A^t(x)$
denote the transpose of $A(x)$.

Given $A\in\mabomega$, 
$f\in\ltoo$, $N>0$ (a constant) and $U$ a closed convex subset of
$\ltoo$, we consider the following basic optimal control problem:
find $\theta^*\in U$ such that,
\begin{equation}\label{optctrl}
J(\theta^*)=\min_{\theta\in U}J(\theta),
\end{equation}
where the cost functional, $J: U \rightarrow \mathbb{R}$, is defined by
\begin{equation}\label{fcostfn}
J(\theta) = I(u,\theta) + \frac{N}{2}\|\theta\|_2^2 
\end{equation}
and the state $u=u(\theta)$ is the weak solution in $\hoz$ of the
boundary value problem 
\begin{equation}\label{fsteqn}
\left\{ \begin{array}{rll}
-\textup{div}(A\nabla u) &  =  f + \theta & \textup{in $\Omega$}\\
u &  =  0  & \textup{on $\partial\Omega$}.
\end{array}\right.
\end{equation}
We consider the following kinds of $I(u,\theta)$:
\begin{enumerate}[(a)]
\item For $B\in\mcdomega$ and symmetric, we consider Dirichlet-type
  integrals: 
\[
I(u,\theta)=\frac{1}{2}\int_\Omega B(x) \nabla
u(\theta)\cdot\nabla u(\theta)\,dx
\]

\item For a fixed $\allr$,
\[
I(u,\theta) = \|u(\theta)\|_r^r.
\]
\end{enumerate}
Then, it can be shown that the $J$ defined above are lower
semicontinuous, coercive and strictly convex, and, therefore, there
exists a unique optimal control, $\theta^*\in U$ minimizing $J$ over
$U$ (cf. \cite{dalmaso}). 

The main aim of this paper is to study the asymptotic behaviour of the
optimal control problems (\ref{optctrl}) depending on a small
parameter $\ve >0$ which represents the scale of heterogeneity of the
material. 

This problem was first studied in \cite{skmv,sksjphocp} for
varying coefficients $\{A_\ve\}\subset\mabomega$ in
\eqref{fsteqn}, $\{B_\ve\}\subset\mcdomega$ in the cost \eqref{fcostfn} corresponding to the case (a) and $N>0$ fixed. The approach used therein
consists in passing to the limit in the corresponding system of optimality
conditions involving an adjoint state. A complete characterization
of the asymptotic behaviour of the control problems was obtained.  

In the above problem, if $N$ is allowed to vary and degenerate by
taking $N=\ve$, then it is called {\em low-cost control} problem.
The low-cost control problems were first introduced by J.~L.~Lions
in \cite{rmrkchcon} (also see \cite{jllmma}) and extensively
studied in \cite{sksjplccp,tmk,sktmk2,tmkakn}. The corresponding
sequence of functionals $\jeps$ is not equicoercive over $\ltoo$ and thus the sequence
of optimal controls $\{\optcon\}$ is not bounded, {\em a priori}, in
$\ltoo$. But $\optcon$ is weakly compact in $\hmo$ and thus converges
to some $\theta^*\in\hmo$. This weak convergence is, in general,
not enough for studying the asymptotic behaviour of the system which
constitutes the optimality conditions. This case was studied in
\cite{sktmk2,tmkakn} and a partial homogenization of the
optimality system was proved when the control set is the
positive cone, with or without periodicity assumptions on the
coefficient matrices. In this article (cf. \S\ref{sec:bprob}), we
obtain, for the first time, a complete homogenization result for
low-cost control problems by taking $U$ to be an arbitrary closed
convex set in $\ltoo$ while assuming the  coefficients to be
periodic. Here we prove the variational convergence of the
optimal control problem in the framework of $\Gamma$-convergence. 

Subsequently, in \S\ref{sec:optmsr}, we study the
asymptotic behaviour of a low cost problem whose limit optimal control
will be in the space of measures. Given a constant $k>0$, let
\[
U =\left\{\theta\in\ltoo\mid \|\theta\|_1 \le k\right\}
\]
be the set of all admissible controls. We consider the optimal
control problem with cost functional as in case (b) governed by
\eqref{fsteqn} with varying coefficients $A_\ve\in\mabomega$. The main
difficulty with this problem, in contrast to the problem with
costs as in case (a), is that there is no weak compactness of the
optimal controls $\optcon$, even in $\hmo$. Thus, one is unable to
homogenize the control problem for a general admissible set
$U$. However, this problem was homogenized when $U$ is the
positive cone and $r=2$, in \cite{sktmk2, tmkakn}, and the limit problem was
obtained on the positive cone of $\hmo$.
We know, by Riesz representation theorem, that any non-negative
distribution in $\hmo$ is a non-negative Radon measure on $\Omega$. Thus, we wish to
consider the controls as measures. Therefore, in \S
\ref{sec:optmsr}, we consider the control set $U$ to be the class
of all functions in $\ltoo$ that are bounded in $L^1(\Omega)$ and
homogenize with respect to weak-* convergence of measures.

The paper is organised as follows: In \S\ref{sec:prem} we recall
some basic facts and tools required for the results proved in
\S\ref{sec:energy} and \S\ref{sec:bprob}. In \S\ref{sec:energy},
we conjecture on the best lower bound of `generalised' energy
functionals for weakly converging data and prove the same under
periodicity assumptions on the coefficients. In \S\ref{sec:bprob},
we homogenize the periodic low-cost control problems with cost as
in case (a). In \S\ref{sec:measdt}, we present the notion of
solution for measure data introduced by G.~Stampacchia. We also
give a $G$-convergence result with respect to varying measures.
Finally, in \S\ref{sec:optmsr}, we study the asymptotic behaviour
of the low-cost control problems with cost as in case (b).

\section{Preliminaries}\label{sec:prem}

In this section, we introduce some basic tools and facts that will
be used in this article. 

\subsection{$G$-convergence and $H$-convergence}\label{sec:hcon}

For all the results in this section we refer to \cite{murtar, murat1,
tartar}. We say a sequence $\{A_\ve\}\subset \mabomega$ {\bf $G$-converges}
to $A_0$ (denoted as $A_\ve \stackrel{H}{\rightharpoonup} A_0$)
iff for any $g\in\hmo$, the solution $\veps$ of 
\begin{equation}\label{eq:soe}
\left\{
\begin{array}{rll}
-\textrm{div}(A_\ve\nabla\veps) &  =  g & \textrm{in $\Omega$}\\
\veps &  =  0  & \textrm{on $\partial\Omega$}
\end{array}
\right.
\end{equation} is such that
\begin{equation}\label{eq:staweakcon}
\veps \rightharpoonup v_0 \text{ weakly in } \hoz 
\end{equation}
where $v_0$ is the unique solution of
\begin{equation}\label{eq:limsoe}
\left\{
\begin{array}{rll}
-\textup{div}(A_0\nabla v_0) &  =  g & \textrm{in $\Omega$}\\
v_0 &  =  0  & \textrm{on $\partial\Omega$}.
\end{array}
\right.
\end{equation}
The matrix $A_0$ is called the $G$-limit of the sequence
$\{A_\ve\}$. We say a sequence of matrices {\bf $H$-converges} to
$A_0$, if it $G$-converges and, in addition, for any $g\in\hmo$
we have 
\begin{equation}\label{eq:fluxcon}
A_\ve\nabla\veps \rightharpoonup A_0\nabla v_0 \text{
weakly in } (L^2(\Omega))^n
\end{equation}
where $\veps$ and $v_0$ are as in \eqref{eq:soe} and
\eqref{eq:limsoe}, respectively. For symmetric matrices both the notion coincide.

Given a sequence $\{A_\ve\}\subset\mabomega$ which $H$-converges
to $A_0$, the sequence of corrector matrices $P_\ve$ is that which
satisfies the following properties:
\begin{enumerate}[(a)]
\item $P_\varepsilon \rightharpoonup I
\textrm{ weakly in }
(\ltoo)^{n\times n}$.
\item $A_\ve P_\ve \rightharpoonup A_0 \textrm{ weakly in
} (\ltoo)^{n\times n}$.
\item ${^tP}_\ve A_\ve P_\ve \rightharpoonup
A_0$ weak* in $[\mathcal{D}^\prime(\Omega)]^{n\times n}$, the
space of distributions.
\end{enumerate}
One procedure to obtain the corrector matrix is by considering
$\chi^i_\ve\in H^1(\Omega)$, for $1\le i \le n$, which are
solutions of
\begin{equation}\label{eq:correcdef}
\left\{
\begin{array}{rll}
-\textrm{div}(A_\ve\nabla\chi^i_\ve) &  = -\textrm{div}(A_0 e_i) & \textrm{in $\Omega$}\\
\chi^i_\varepsilon &  =  x_i  & \textrm{on $\partial\Omega$}
\end{array}
\right.
\end{equation}
and then by defining $P_\ve e_i = \nabla \chi^i_\ve$ for $1 \le i \le n$.

\subsection{$\Gamma$-convergence}

For all the results in this section we refer to
\cite{dalmaso,braides}.
Let $X$ be a topological space and let $\rbar=\mathbb{R}
\cup \{-\infty,+\infty\}$. Let $\{F_m\}$ be a sequence of
functions from $X$ in to $\rbar$.

For any function $F$ on $X$, let $S(F)$ be the set of all lower
semicontinuous functions $G$ on $X$ such that $G\le F$. We define
the \emph{lower semicontinuous envelope} of $F$, $\overline{F}$, as
\[ \overline{F}(x)= \sup_{G\in S(F)}G(x), \quad \forall x\in X.\]
Observe that every lower semicontinuous
function is its own envelope.

We now define the \emph{sequential} $\Gamma$- lower and upper limit. The
$\Gamma$-\emph{upper limit} of $F_m$ is given by,
\[F^+(x):= \inf\left\{\limsup_{m\to \infty}F_m(x_m): x_m \to
x\right\}.
\]
Similarly, the $\Gamma$-\emph{lower limit} of $F_m$ is given by,
\[
F^-(x):= \inf\left\{\liminf_{m\to \infty}F_m(x_m): x_m \to x\right\}.
\]
We say a function $F$ is the $\Gamma$-limit of $F_m$ if
$F=F^+=F^-$. A characterization of the sequential $\Gamma$-limit $F$ w.r.t
the topology of $X$ is given by the following two conditions:
\begin{enumerate}[(i)]
\item For every $x\in X$ and for every sequence $\{x_m\}$
converging to $x$ in $X$, we have
\[ \liminf_{m\rightarrow\infty} F_m(x_m)\geq F(x). \]
\item For every $x\in X$, there exists a sequence $\{x_m\}$
converging to $x$ in $X$ such that
\[ \limsup_{m\rightarrow\infty} F_m(x_m)\le F(x). \]
\end{enumerate}

We now recall a result of $\Gamma$-convergence theory.
\begin{thm}\label{thm:gamma}
Let $F_m$ $\Gamma$-converge to $F$ in $X$ and let $x_m$ be a
minimizer of $F_m$ in $X$. If $\{x_m\}$ converges to $x$ in $X$,
then $x$ is a minimizer of $F$ in $X$ and the minima converge,
\[
F(x)=\lim_{m\to \infty}F_m(x_m).
\]\qed
\end{thm}

\subsection{Two-scale convergence}

For all the results in this section we refer to
\cite{guetseng,allaire2s,lukguet}.
Let $Y=(0,1)^n$ be the unit cell of $\rn$. We say that a sequence of
functions $\{\veps\}$ in $L^2(\Omega)$ \emph{weakly two-scale
converges} to a limit $v\in L^2(\Omega\times Y)$
(denoted as $v_\ve \stackrel{2\text{s}}{\rightharpoonup} v$) if
\[
\int_\Omega \veps(x) \phi\left(x,\frac{x}{\ve}\right)\,dx \rightarrow
\int_\Omega \int_Y v(x,y)\phi(x,y)\,dy\,dx, \quad \forall \phi\in
L^2[\Omega; C_{\text{per}}(Y)].
\]

It is possible to replace $L^2[\Omega; C_{\text{per}}(Y)]$ by
$\mathcal{D}[\Omega,C^\infty_{\text{per}}(Y)]$ in the above
definition of weak two-scale convergence provided we add the
assumption that $\{v_\ve\}$ is bounded in $L^2(\Omega)$.

We say that a sequence of functions $\{\veps\}$ in $L^2(\Omega)$
\emph{strongly} two-scale converges to $v\in L^2(\Omega\times Y)$
(denoted as $\veps\stackrel{2\text{s}}{\rightarrow} v$), if $\{v_\ve\}$ weakly
two-scale converges to $v$ and
\[
\lim_{\ve \rightarrow 0}\int_\Omega |v_\ve|^2 \,dx = \int_\Omega
\int_Y |v(x,y)|^2\,dy\,dx.
\]
We say that a function $\phi(x,y)$, $Y$-periodic in $y$, is an
\emph{admissible} function if the sequence
$\phi_\ve=\phi\left(x,\frac{x}{\varepsilon}\right)$ strongly
two-scale converges to $\phi(x,y)$.  The spaces
$C[\Omega;L^2_{\text{per}}(Y)]$,
$L^\infty[\Omega;C_{\text{per}}(Y)]$ and
$L^\infty_{\text{per}}[Y;C(\bar{\Omega})]$ are some examples of
classes of admissible functions.

We now state some of the main results of two-scale convergence
theory that will be used in this article.
\begin{thm}
For any bounded sequence $\{\veps\}\subset \ltoo$, there exists a
$v\in L^2(\Omega\times Y)$ such that, $\veps$ weakly two-scale
converges to $v$, for a subsequence. Also, if $\veps$ is bounded
in $H^1(\Omega)$, then $v$ is independent of $y$ and is in
$H^1(\Omega)$, and there exists a $v_1\in L^2[\Omega;
H^1_{\textup{per}}(Y)]$ such that, up to a subsequence, $\nabla
\veps$ weakly two-scale converges to $\nabla v + \nabla_y v_1$.
\end{thm}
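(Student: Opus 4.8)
The plan is to treat the two assertions separately, building the second on the compactness furnished by the first. For the compactness statement I would regard, for each $\ve$, the assignment $\phi \mapsto \int_\Omega \veps(x)\,\phi(x,x/\ve)\,dx$ as a linear functional on the separable space $L^2[\Omega;C_{\text{per}}(Y)]$. The crucial estimate is
\[
\left|\int_\Omega \veps(x)\,\phi\left(x,\tfrac{x}{\ve}\right)\,dx\right| \le \|\veps\|_{\ltoo}\,\left\|\phi\left(\cdot,\tfrac{\cdot}{\ve}\right)\right\|_{\ltoo},
\]
combined with the mean-value (Riemann--Lebesgue) property that $\|\phi(\cdot,\cdot/\ve)\|_{\ltoo} \to \|\phi\|_{L^2(\Omega\times Y)}$, which holds because $|\phi|^2$ is again an admissible function. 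Since $\|\veps\|_{\ltoo}$ is bounded, these functionals are uniformly bounded; extracting a subsequence that converges on a countable dense subset and passing to the limit produces a bounded functional on $L^2[\Omega;C_{\text{per}}(Y)]$ with norm at most $\sup_\ve\|\veps\|_{\ltoo}$. By density of the test space in $L^2(\Omega\times Y)$ it extends to this Hilbert space, and the Riesz representation theorem supplies the desired $v\in L^2(\Omega\times Y)$ with $\veps \stackrel{2\text{s}}{\rightharpoonup} v$.

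Now suppose $\{\veps\}$ is bounded in $H^1(\Omega)$. Then $\veps$ and each component of $\nabla\veps$ are bounded in $\ltoo$, so by the first part and a diagonal extraction there are $v\in L^2(\Omega\times Y)$ and $\xi\in (L^2(\Omega\times Y))^n$ with $\veps \stackrel{2\text{s}}{\rightharpoonup} v$ and $\nabla\veps \stackrel{2\text{s}}{\rightharpoonup} \xi$; passing to a further subsequence I may also assume $\veps \rightharpoonup v$ weakly in $H^1(\Omega)$, and testing the two-scale convergence of $\veps$ against $y$-independent functions identifies $\int_Y v(x,y)\,dy$ with this weak limit, so that $v\in H^1(\Omega)$ once it is known not to depend on $y$. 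To obtain the $y$-independence I would test $\nabla\veps$ against $\ve\,\psi(x,x/\ve)$ for $\psi\in\mathcal{D}[\Omega;C^\infty_{\text{per}}(Y)]^n$ and integrate by parts. Using $\nabla[\psi(x,x/\ve)] = (\nabla_x\psi)(x,x/\ve) + \ve^{-1}(\nabla_y\psi)(x,x/\ve)$, the prefactor $\ve$ kills every term except the one carrying $\text{div}_y\psi$, and letting $\ve\to 0$ yields $\int_\Omega\int_Y v\,\text{div}_y\psi\,dy\,dx = 0$ for all such $\psi$; hence $\nabla_y v = 0$ and $v=v(x)$.

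To identify $\xi$, I would restrict to test fields $\psi\in\mathcal{D}[\Omega;C^\infty_{\text{per}}(Y)]^n$ that are solenoidal in the fast variable, $\text{div}_y\psi=0$, and test $\nabla\veps$ against $\psi(x,x/\ve)$. Integration by parts now loses no $\ve^{-1}$ term, and in the limit one obtains
\[
\int_\Omega\int_Y \xi\cdot\psi\,dy\,dx = -\int_\Omega\int_Y v\,\text{div}_x\psi\,dy\,dx = \int_\Omega\int_Y \nabla v\cdot\psi\,dy\,dx,
\]
the last equality following by averaging $\psi$ over $Y$ and integrating by parts in $x$. Thus $\xi-\nabla v$ is $L^2(\Omega\times Y)$-orthogonal to every $y$-solenoidal field, and the orthogonal decomposition of $(L^2_{\text{per}}(Y))^n$ into divergence-free fields and gradients forces $\xi(x,\cdot)-\nabla v(x)\in \nabla_y H^1_{\text{per}}(Y)$ for a.e.\ $x$; writing $\xi = \nabla v + \nabla_y v_1$ gives the claimed $v_1$, so that $\nabla\veps \stackrel{2\text{s}}{\rightharpoonup} \nabla v + \nabla_y v_1$.

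The step I expect to be most delicate is this last one: passing from orthogonality against smooth $y$-solenoidal fields to the representation $\xi-\nabla v=\nabla_y v_1$ requires the periodic Helmholtz decomposition together with a density argument to enlarge the admissible class of $\psi$, and one must check that the field $v_1$, a priori defined only pointwise in $x$, can be selected measurably (for instance by normalising $\int_Y v_1\,dy=0$ and invoking the continuity of the decomposition in $x$) so that it indeed belongs to $L^2[\Omega;H^1_{\text{per}}(Y)]$. By comparison, the compactness argument and the $y$-independence of $v$ are routine once the mean-value property of admissible test functions is in hand.
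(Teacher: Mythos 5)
The paper does not prove this statement: it is recalled verbatim from the two-scale convergence literature (Nguetseng, Allaire, Lukkassen--Nguetseng--Wall) and cited to those references, so there is no in-house proof to compare against. Your argument is, in outline, precisely the classical Nguetseng--Allaire proof, and it is correct: the compactness part via uniform boundedness of the functionals $\phi\mapsto\int_\Omega \veps\,\phi(x,x/\ve)\,dx$ on the separable space $L^2[\Omega;C_{\text{per}}(Y)]$, the mean-value property to show the limit functional is continuous for the weaker $L^2(\Omega\times Y)$ norm, Riesz representation; then the $\ve\,\psi(x,x/\ve)$ test to kill everything but $\operatorname{div}_y\psi$ and force $\nabla_y v=0$; then $y$-solenoidal test fields to identify the two-scale limit of $\nabla\veps$.

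The one place your write-up is thin is the step you yourself flag: passing from
\[
\int_\Omega\int_Y \bigl(\xi-\nabla v\bigr)\cdot\psi\,dy\,dx=0 \quad\text{for all } \psi\in\mathcal{D}[\Omega;C^\infty_{\text{per}}(Y)]^n \text{ with } \operatorname{div}_y\psi=0
\]
to $\xi-\nabla v=\nabla_y v_1$ with $v_1\in L^2[\Omega;H^1_{\text{per}}(Y)]$. Your proposed route --- a pointwise-in-$x$ Helmholtz decomposition followed by a measurable selection --- can be made to work but is more awkward than necessary. The standard device is to avoid the pointwise decomposition altogether: one checks that $\{\nabla_y w : w\in L^2[\Omega;H^1_{\text{per}}(Y)]\}$ is a \emph{closed} subspace of $L^2(\Omega\times Y)^n$ (closedness follows from the Poincar\'e--Wirtinger inequality on $Y$ after normalising $\int_Y w\,dy=0$), and that its orthogonal complement in the product space is exactly the set of fields that are $y$-divergence-free in the sense of distributions; the density of smooth solenoidal test fields in that complement then gives $\xi-\nabla v\in\overline{\nabla_y L^2[\Omega;H^1_{\text{per}}(Y)]}=\nabla_y L^2[\Omega;H^1_{\text{per}}(Y)]$ directly, with measurability in $x$ built in. With that adjustment the proof is complete; the remaining steps (separability, the measurability of $x\mapsto\phi(x,x/\ve)$ for admissible $\phi$, and the identification of $\int_Y v\,dy$ with the weak $H^1$ limit) are routine and you have handled them correctly.
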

\begin{thm}\label{thm:strweak}
Let $u_\ve \stackrel{2\text{s}}{\rightarrow} u$. Then, given any
sequence $v_\ve \stackrel{2\text{s}}{\rightharpoonup} v$, we have that
\[
\int_\Omega u_\ve(x) v_\ve(x) \tau(x)\,dx \rightarrow \int_\Omega
\int_Y u(x,y) v(x,y) \tau(x)\,dy\,dx
\]
for every $\tau\in C^\infty_0(\Omega)$.
The $\tau\in C^\infty_0(\Omega)$ can be
replaced with $\taueps$ where
$\tau\in\mathcal{D}(\Omega,C^\infty_{\textup{per}}(Y))$.
\end{thm}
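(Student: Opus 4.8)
The plan is to treat strong two-scale convergence of $\{u_\ve\}$ as a surrogate for strong $L^2$-convergence and then run the standard ``strong times weak'' argument through a density approximation. Throughout I would use that both sequences are bounded in \ltoo: the sequence $\{u_\ve\}$ because $\int_\Omega|u_\ve|^2\,dx$ converges to the finite limit $\int_\Omega\int_Y|u|^2\,dy\,dx$, and $\{v_\ve\}$ because a weakly two-scale convergent sequence is bounded in \ltoo\ (a consequence of the uniform boundedness principle applied to the functionals $\phi\mapsto\int_\Omega v_\ve(x)\phi(x,x/\ve)\,dx$). Write $M:=\sup_\ve\|v_\ve\|_{L^2(\Omega)}<\infty$. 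Fixing $\tau\in C^\infty_0(\Omega)$, I would pick a smooth admissible approximant $\phi\in\mathcal{D}[\Omega,C^\infty_{\textup{per}}(Y)]$ of $u$, which is possible since this class is dense in $L^2(\Omega\times Y)$, and set $\phi_\ve(x):=\phi(x,x/\ve)$.

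The core of the argument is the estimate
\[
\limsup_{\ve\to0}\int_\Omega|u_\ve-\phi_\ve|^2\,dx=\|u-\phi\|_{L^2(\Omega\times Y)}^2.
\]
To obtain it I would expand $\int_\Omega|u_\ve-\phi_\ve|^2\,dx$ into $\int_\Omega|u_\ve|^2 - 2\int_\Omega u_\ve\phi_\ve + \int_\Omega|\phi_\ve|^2$ and pass to the limit term by term: the first term tends to $\int_\Omega\int_Y|u|^2$ by the norm condition built into the definition of strong two-scale convergence; the cross term tends to $\int_\Omega\int_Y u\phi$ because $u_\ve$ converges weakly two-scale and $\phi$ is an admissible test function; and the last term tends to $\int_\Omega\int_Y|\phi|^2$ because $\phi$ is admissible, so $\phi_\ve$ is itself strongly two-scale convergent to $\phi$. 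Collecting these three limits reproduces exactly $\|u-\phi\|_{L^2(\Omega\times Y)}^2$.

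Next I would split
\[
\int_\Omega u_\ve v_\ve\tau\,dx=\int_\Omega(u_\ve-\phi_\ve)v_\ve\tau\,dx+\int_\Omega\phi_\ve v_\ve\tau\,dx.
\]
On the second integral, $(x,y)\mapsto\phi(x,y)\tau(x)$ belongs to $L^2[\Omega;C_{\textup{per}}(Y)]$, so by $v_\ve\stackrel{2\text{s}}{\rightharpoonup}v$ it converges to $\int_\Omega\int_Y\phi v\tau\,dy\,dx$. On the first integral, Cauchy--Schwarz gives the bound $\|\tau\|_\infty M\|u_\ve-\phi_\ve\|_{L^2(\Omega)}$, whose $\limsup$ is at most $\|\tau\|_\infty M\|u-\phi\|_{L^2(\Omega\times Y)}$ by the core estimate. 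Comparing with the target $\int_\Omega\int_Y uv\tau$ and applying Cauchy--Schwarz once more to $\int_\Omega\int_Y(\phi-u)v\tau$, the total error is bounded by $\|\tau\|_\infty\bigl(M+\|v\|_{L^2(\Omega\times Y)}\bigr)\|u-\phi\|_{L^2(\Omega\times Y)}$; letting $\phi\to u$ in $L^2(\Omega\times Y)$ drives this to zero and proves the first claim. For the variant with \taueps, where $\tau\in\mathcal{D}(\Omega,C^\infty_{\textup{per}}(Y))$, the identical decomposition applies: the admissible test function for $v_\ve$ becomes the product $\phi(x,y)\tau(x,y)$ (still in the admissible class), and $\|\tau\|_\infty$ is replaced by $\|\tau\|_{L^\infty(\Omega\times Y)}$, which is finite and controls $\|\taueps\|_\infty$ uniformly in $\ve$; the core estimate is unchanged since it does not involve $\tau$.

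The main obstacle is precisely the core estimate. Weak two-scale convergence of $u_\ve$ on its own yields only one-sided information, and it is the norm identity contained in the definition of strong two-scale convergence that upgrades it, allowing $\|u_\ve-\phi_\ve\|_{L^2(\Omega)}$ to be made small as $\phi\to u$; this is the two-scale analogue of the elementary passage to the limit in the product of a strongly convergent and a weakly convergent sequence.
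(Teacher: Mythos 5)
Your proposal is correct, and the argument is complete: the uniform boundedness of $\{v_\ve\}$, the expansion of $\int_\Omega|u_\ve-\phi_\ve|^2\,dx$ using the norm identity in the definition of strong two-scale convergence, and the strong-times-weak splitting with a smooth admissible approximant $\phi$ of $u$ all go through, including the variant with $\taueps$. The paper itself states this theorem without proof, citing the two-scale convergence literature, and your density/approximation argument is essentially the standard proof given in those references, so there is nothing to flag.
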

We now recall a property of convex periodic functionals with
respect to two-scale convergence.
\begin{prop}[{\cite[Proposition 2.5]{bouchfragala}}]\label{prop:convexlsc}
Let $j:= j(y,\xi)$ be a measurable function on $\rn\times\rn$,
$Y$-periodic in $y$, convex in $\xi$ and satisfies for some constants $a,b>0$,
\[
a|\xi|^2 \le j(y,\xi) \le b(1+|\xi|^2),\quad (y,\xi)\in
\rn\times\rn.
\]
Also, let $\{v_\ve\}\subset\ltoo$ be such that
$\veps\stackrel{2\text{s}}{\rightharpoonup} v\in L^2(\Omega\times
Y)$. Then
\[
\liminf_{\ve\to 0}\int_\Omega j\left(\frac{x}{\ve},
\veps(x)\right)\,dx \ge \int_{\Omega\times Y} j(y,v(x,y))\,dx\,dy.
\]\qed
\end{prop}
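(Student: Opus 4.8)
The plan is to obtain the inequality by testing the convex integrand against a rich family of affine (in $\xi$) minorants and then passing to the limit using only the \emph{linear} action of weak two-scale convergence. Since $j(y,\cdot)$ is convex, finite and of quadratic growth, it is continuous in $\xi$ and equals the supremum of its affine minorants: fixing a countable dense set $\{\xi_k\}\subset\rn$ and selecting, for each $k$, a measurable $Y$-periodic slope $p_k(y)\in\partial_\xi j(y,\xi_k)$, one has
\[
j(y,\xi)=\sup_{k}\bigl\{p_k(y)\cdot(\xi-\xi_k)+j(y,\xi_k)\bigr\}.
\]
The quadratic upper bound forces $|p_k(y)|\le C(1+|\xi_k|)$ and $j(y,\xi_k)\le b(1+|\xi_k|^2)$, so every affine piece $\ell_k(y,\xi):=p_k(y)\cdot\xi+c_k(y)$ has coefficients $p_k,c_k\in L^\infty_{\mathrm{per}}(Y)$. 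Writing $j_m:=\max_{1\le k\le m}\ell_k$, we have $j_m\uparrow j$ pointwise.

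First I would fix $m$ and perform a measurable selection on $\Omega\times Y$: for each $(x,y)$ pick an index $k^\star(x,y)\in\{1,\dots,m\}$ attaining $j_m(y,v(x,y))$, and set $\Psi(x,y):=p_{k^\star(x,y)}(y)$ and $\Phi(x,y):=c_{k^\star(x,y)}(y)$. By construction $\Psi(x,y)\cdot\xi+\Phi(x,y)\le j(y,\xi)$ for every $\xi$, while $\Psi(x,y)\cdot v(x,y)+\Phi(x,y)=j_m(y,v(x,y))$. Evaluating the pointwise bound at $\xi=\veps(x)$, with $y=x/\ve$, gives
\[
\int_\Omega j\!\left(\tfrac{x}{\ve},\veps(x)\right)dx\ \ge\ \int_\Omega\Bigl[\Psi\!\left(x,\tfrac{x}{\ve}\right)\cdot\veps(x)+\Phi\!\left(x,\tfrac{x}{\ve}\right)\Bigr]dx.
\]
Assuming $\Psi$ and $\Phi$ are admissible, the first term on the right converges to $\int_{\Omega\times Y}\Psi\cdot v$ by the definition of $\veps\stackrel{2\text{s}}{\rightharpoonup}v$ (note $\veps$ is $\ltoo$-bounded, since testing against $y$-independent functions yields weak $\ltoo$-convergence), and the second to $\int_{\Omega\times Y}\Phi$ since an admissible $\Phi$, tested against the constant $1$, gives $\int_\Omega\Phi(x,\tfrac{x}{\ve})\,dx\to\int_{\Omega\times Y}\Phi$. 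Taking the liminf and then letting $m\to\infty$ with monotone convergence, $\int_{\Omega\times Y}j_m(y,v)\uparrow\int_{\Omega\times Y}j(y,v)$, which is exactly the claimed bound.

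The main obstacle is precisely the admissibility of the selected coefficients $\Psi,\Phi$: the measurable selection produces functions that are only measurable in $(x,y)$, whereas the two-scale machinery requires test functions in an admissible class such as $L^\infty[\Omega;C_{\mathrm{per}}(Y)]$. I would resolve this by a density argument, approximating $\Psi$ and $\Phi$ in $L^1(\Omega\times Y)$ (and $\Psi$ additionally in $L^2$) by finite sums of products $\eta(x)\zeta(y)$ with $\eta\in C_0^\infty(\Omega)$ and $\zeta\in C^\infty_{\mathrm{per}}(Y)$, controlling the error in the two-scale passage through the quadratic growth of $j$ (which confines the slopes) and the $\ltoo$-boundedness of $\veps$. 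An equivalent and perhaps cleaner route replaces the affine bookkeeping by the Fenchel inequality $j(y,\xi)\ge\xi\cdot\eta-j^\ast(y,\eta)$ tested against smooth $\eta(x,y)$, followed by taking the supremum over such $\eta$; there the only delicate point is again the same selection-and-approximation step, since $j^\ast(\tfrac{x}{\ve},\eta(x,\tfrac{x}{\ve}))$ strongly two-scale converges automatically. Everything else is a routine application of the definitions of weak and strong two-scale convergence together with lower semicontinuity of the liminf.
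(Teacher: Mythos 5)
Your first route has a genuine gap at exactly the point you flag, and the proposed repair does not work. After the measurable selection, your coefficients have the form $\Psi(x,y)=\sum_{k=1}^m\mathbf{1}_{E_k}(x,y)\,p_k(y)$ with $E_k\subset\Omega\times Y$ merely measurable, and the quantity you must control is $\int_\Omega\bigl(\Psi-\Psi_\delta\bigr)\!\left(x,\tfrac{x}{\ve}\right)\cdot v_\ve(x)\,dx$ for an admissible approximant $\Psi_\delta$. The diagonal $\{(x,x/\ve)\}$ is a null set of $\Omega\times Y$, so an $L^1$ or $L^2(\Omega\times Y)$ approximation of $\Psi$ says nothing about the trace $x\mapsto(\Psi-\Psi_\delta)(x,x/\ve)$; neither the quadratic growth of $j$ nor the $\ltoo$-boundedness of $v_\ve$ rescues this. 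This is precisely the obstruction that makes ``admissibility'' a substantive hypothesis rather than a density issue, and it is why no proof of this proposition proceeds by approximating a measurably selected maximizer.

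Your second, ``cleaner'' route is the correct one --- and it is the one the paper itself carries out for the quadratic case in the lemma establishing \eqref{eq:lsconvx} --- but the delicate point there is \emph{not} ``the same selection-and-approximation step.'' No selection is needed at all. One tests Fenchel's inequality $j(y,\xi)\ge\xi\cdot\eta-j^\ast(y,\eta)$ only against smooth admissible $\eta\in\bigl(\mathcal{D}[\Omega;C^\infty_{\mathrm{per}}(Y)]\bigr)^n$, passes to the limit for each fixed $\eta$ (the term $\int_\Omega v_\ve\cdot\eta(x,x/\ve)$ by weak two-scale convergence, the term with $j^\ast$ by admissibility, using that $j^\ast(y,\cdot)$ is locally Lipschitz uniformly in $y$ thanks to the growth bounds), and then takes the supremum over this $C^1$-stable family. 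The missing ingredient that converts $\sup_\eta\int_{\Omega\times Y}[v\cdot\eta-j^\ast(y,\eta)]$ into $\int_{\Omega\times Y}\sup_{\zeta\in\rn}[v\cdot\zeta-j^\ast(y,\zeta)]=\int_{\Omega\times Y}j^{\ast\ast}(y,v)=\int_{\Omega\times Y}j(y,v)$ is the interchange of supremum and integral over a $C^1$-stable class, i.e.\ Lemma~\ref{lm:commute} (Bouchitt\'e--Dal Maso), together with $j=j^{\ast\ast}$ by convexity and finiteness. With that lemma named and invoked, your second route closes; without it, both of your routes stall at the same place for the same reason.
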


\subsection{Convex Analysis}\label{sec:convex}

For any function $h$ on $\rn$, one defines its
convex conjugate $h^\prime$ on $\rn$ in the following way:
\[
h^\prime(x^\prime) = \sup_{x\in\rn}\left\{\langle
x,x^\prime\rangle - h(x)\right\}.
\]
The following inequality is called the \emph{Fenchel's inequality} for
any proper convex function $h$ and its conjugate $h^\prime$:
\[
\langle x,x^\prime\rangle \le h(x)+h^\prime(x^\prime), \quad
\forall x,x^\prime\in\rn.
\]
If $h$ is a quadratic convex function, say of the form
\[
h(x) = \frac{1}{2}\langle x,Qx\rangle
\]
where $Q$ is a symmetric, positive definite
$n\times n$ matrix, then
\begin{equation}\label{eq:conconj}
h^\prime(x^\prime) = \frac{1}{2}\langle
x^\prime,Q^{-1}x^\prime\rangle.
\end{equation}

For above results, we refer to \cite{rockafellar}. We now comment
on a classical property of commutativity of infimum and the
integral. The first results along this direction was proved by
Rockafellar in \cite{rockafellar2, rockafellar3}. Another version
of the same was proved in \cite[Theorem 1]{bouchvaladier}. However
we shall now state the version as given in \cite[Lemma
4.3]{bouchdalmaso}.

If $\{\Delta_k\}$ is a family of measurable set functions from
$\Omega$ into $\rn$, then there exists a measurable set function
(cf. \cite[Proposition 14]{valadier}) $\Delta$ from $\Omega$ into
$\rn$ with the following properties:
\begin{enumerate}[(i)]
\item For every $k$, we have $\Delta_k(x)\subseteq \Delta(x)$ for
a.e. $x\in\Omega$.
\item If $\Pi$ is a set function on $\Omega$ such that for every
$k$, $\Delta_k(x)\subseteq \Pi(x)$ for a.e. $x\in\Omega$, then
$\Delta(x)\subseteq \Pi(x)$ for a.e. $x\in \Omega$. 
\end{enumerate}
The set function $\Delta$ is denoted as $\Delta(x)=
\text{ess-sup}_{k}\Delta_k(x)$.

Let $E$ be a set of measurable functions from $\Omega$ to $\rn$.
We say $E$ is $C^1$-\emph{stable} if for every finite family
$\{\omega_k\}_k \subset E$ and for every non-negative family of
functions $\{\psi_k\}_k \subset C^1(\overline{\Omega})$ such that
$\Sigma_k \psi_k =1$ in $\Omega$, we have that $\Sigma_k
\psi_k\omega_k \in E$. Observe that $C^1$-stability implies
convexity.

\begin{lemma}[{\cite[Lemma 4.3]{bouchdalmaso}}]\label{lm:commute}
Let $E$ be a $C^1$-stable set and let $j$ be Borel measurable
on $\Omega\times\rn$ such that $j(x,\cdot)$ is convex on $\rn$
for a.e. $x\in\Omega$. Suppose that $j(\cdot,\omega(\cdot))\in L^1(\Omega)$, for
every $\omega\in E$, and let $\Delta(x)= \text{ess-sup}_{\omega\in
E}\{\omega(x)\}$ then
\[
\inf_{\omega\in E}\int_\Omega j(x,\omega(x))\,dx = \int_\Omega
\inf_{\zeta\in\Delta(x)} j(x,\zeta)\,dx.
\]\qed
\end{lemma}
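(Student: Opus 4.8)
The plan is to prove the two inequalities separately, the substantive content lying entirely in the $\le$ direction, which rests on a measurable selection argument combined with the $C^1$-stability of $E$. Before either inequality, I would establish that the marginal integrand $m(x):=\inf_{\zeta\in\Delta(x)}j(x,\zeta)$ is measurable: since $j(x,\cdot)$ is convex it is lower semicontinuous, so $j$ is a normal integrand on $\Omega\times\rn$, and together with the measurability of the set-valued map $x\mapsto\Delta(x)$ (which is built into Valadier's construction of the essential supremum) the projection theorem for measurable multifunctions yields that $m$ is measurable, so the right-hand integral is well defined. The inequality $\ge$ is then immediate: by property (i) of the essential supremum every $\omega\in E$ satisfies $\omega(x)\in\Delta(x)$ for a.e.\ $x$, whence $j(x,\omega(x))\ge m(x)$ pointwise a.e.; integrating and taking the infimum over $\omega\in E$ gives $\inf_{\omega\in E}\int_\Omega j(x,\omega(x))\,dx \ge \int_\Omega m(x)\,dx$.

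For the reverse inequality I would fix $\delta>0$ and, by the measurable selection theorem of Castaing and Valadier applied to the normal integrand $j$ and the multifunction $\Delta$, produce a measurable map $\sigma:\Omega\to\rn$ with $\sigma(x)\in\Delta(x)$ a.e.\ and $j(x,\sigma(x))\le m(x)+\delta$ a.e. The remaining task is to approximate this selection $\sigma$ by a genuinely admissible $\omega\in E$ in such a way that the energy $\int_\Omega j(x,\omega(x))\,dx$ is close to $\int_\Omega j(x,\sigma(x))\,dx$. Since $\Delta(x)$ is by definition the essential supremum of the attainable values $\{\omega(x):\omega\in E\}$, the point $\sigma(x)$ can, locally in $x$, be approached by such values.

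The approximation is where $C^1$-stability enters decisively. Using a Scorza--Dragoni (Lusin-type) regularization I would cover $\Omega$, up to a set of arbitrarily small measure, by finitely many pieces on each of which some $\omega_k\in E$ satisfies $|\omega_k(x)-\sigma(x)|$ small; choosing a nonnegative $C^1$ partition of unity $\{\psi_k\}$ subordinate to this cover produces $\omega:=\sum_k\psi_k\omega_k$, which lies in $E$ precisely because $E$ is $C^1$-stable. One then passes from the pointwise closeness of $\omega$ and $\sigma$ to closeness of the energies by exploiting the convexity, hence local continuity, of $j(x,\cdot)$ together with the integrability hypothesis $j(\cdot,\omega(\cdot))\in L^1(\Omega)$, obtaining $\int_\Omega j(x,\omega(x))\,dx \le \int_\Omega m(x)\,dx + C\delta$. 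Letting $\delta\to 0$ yields $\inf_{\omega\in E}\int_\Omega j(x,\omega(x))\,dx \le \int_\Omega m(x)\,dx$, which combined with the first inequality gives the claimed identity.

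I expect the main obstacle to be the gluing estimate: controlling the energy of the patched function $\sum_k\psi_k\omega_k$ on the overlaps of the cover, where the values of the various $\omega_k$ are mixed. The key is to apply Jensen's inequality to the convex combination $\omega(x)=\sum_k\psi_k(x)\omega_k(x)$, using the convexity of $j(x,\cdot)$, so that no energy is created in excess of $m(x)+C\delta$. It is exactly for this step that one needs the full strength of $C^1$-stability, namely closure under convex combinations with \emph{spatially varying} coefficients $\psi_k(x)$, rather than mere convexity of $E$; the smoothness of the $\psi_k$ is what keeps the patched competitor inside the admissible class $E$ while the convexity of $j$ controls its cost.
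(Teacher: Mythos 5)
First, note that the paper itself offers no proof of this lemma: it is quoted verbatim from Bouchitt\'e--Dal Maso (their Lemma 4.3), with the end-of-proof symbol attached directly to the statement, so there is no in-paper argument to compare yours against; I can only assess your proposal on its own terms. Its architecture is the standard one for such inf--integral interchange results: the inequality $\ge$ follows from property (i) of the essential supremum exactly as you say, and the inequality $\le$ is obtained by producing, for each $\delta>0$, a competitor in $E$ whose energy is within $O(\delta)$ of $\int_\Omega m(x)\,dx$, where $m(x)=\inf_{\zeta\in\Delta(x)}j(x,\zeta)$, the competitor being a $C^1$-partition-of-unity patching of finitely many elements of $E$, with Jensen's inequality controlling the cost of the patch. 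That skeleton is right, and your identification of the gluing estimate as the crux, with $C^1$-stability and convexity of $j(x,\cdot)$ as the mechanism, is correct.

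Two steps are genuinely underdeveloped. (a) You assert that the selection $\sigma(x)$ can be approached by attainable values $\omega(x)$, $\omega\in E$, ``by definition'' of the essential supremum. That is not a definition but a fact one must invoke: Valadier's construction yields a \emph{countable} subfamily $\{\omega_n\}\subset E$ with $\Delta(x)=\overline{\{\omega_n(x):n\in\mathbb{N}\}}$ for a.e.\ $x$. Once you have this, the detour through $\sigma$ and through ``pointwise closeness of values implies closeness of energies'' is both unnecessary and hard to justify, since the modulus of continuity of $j(x,\cdot)$ is not uniform in $x$; it is cleaner to observe that, by continuity of the finite convex function $j(x,\cdot)$, one has $m(x)=\inf_n j(x,\omega_n(x))$ a.e., and then to choose a finite measurable partition of $\Omega$ (up to a remainder of small measure) on each piece of which some $\omega_k$ satisfies $j(x,\omega_k(x))\le m(x)+\delta$ directly. (b) On the overlaps Jensen gives $j\bigl(x,\sum_k\psi_k(x)\omega_k(x)\bigr)\le\sum_k\psi_k(x)\,j(x,\omega_k(x))$, and the right-hand side is \emph{not} bounded by $m(x)+C\delta$ there, since each $\omega_k$ is near-optimal only on its own piece; the correct way to finish is to shrink the transition region and use the equi-integrability of the finitely many functions $j(\cdot,\omega_k(\cdot))\in L^1(\Omega)$, together with $\int_{R}m^{-}\,dx\to0$ as $|R|\to 0$ when $\int_\Omega m\,dx>-\infty$ and a separate truncation argument when $\int_\Omega m\,dx=-\infty$. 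With these repairs your outline becomes a complete proof.
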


\section{Energy bounds}\label{sec:energy}

Given $A_\ve \subset \mabomega$ which $H$-converges to $A_0$, a standard result of $H$-convegence is that the energies converge, \emph{i.e.},
\begin{equation}\label{eq:energy}
\int_\Omega A_\ve\nabla\veps.\nabla\veps\,dx
\stackrel{\ve\to 0}{\longrightarrow}
\int_\Omega A_0\nabla v_0.\nabla v_0\,dx,
\end{equation}
where $v_\ve$ and $v_0$ are the solution of \eqref{eq:soe} and
\eqref{eq:limsoe}, respectively. Moreover, one has from the theory of
$\Gamma$-convergence, the following basic result.
\begin{lemma}[{cf. \cite[Chapter 13]{dalmaso}}]\label{lm:murat} Given a sequence of symmetric
matrices $A_\ve \subset \mabomega$ which $G$-converges to $A_0$ and given any
sequence $w_\ve$ weakly converging to $w_0$ in $\hoz$, we have
\begin{equation}\label{eq:glltenergy}
\liminf_{\ve \rightarrow 0}\int_\Omega A_\ve \nabla
w_\ve\cdot\nabla w_\ve\, dx \ge \int_\Omega A_0\nabla
w_0\cdot\nabla w_0\,dx.
\end{equation}
\end{lemma}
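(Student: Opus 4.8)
The plan is to establish the lower semicontinuity inequality \eqref{eq:glltenergy} by recognizing the energy as a $\Gamma$-liminf and reducing it to a pointwise convexity argument via the corrector matrices. The starting observation is that, since the matrices are symmetric and $G$-convergence coincides with $H$-convergence here, we have access to the corrector matrices $P_\ve$ from \eqref{eq:correcdef} satisfying properties (a)--(c) of \S\ref{sec:hcon}. The essential structural fact I would exploit is the decomposition of the energy using correctors: one writes $\nabla w_\ve = P_\ve \nabla w_0 + (\nabla w_\ve - P_\ve \nabla w_0)$ and expands the quadratic form $\int_\Omega A_\ve \nabla w_\ve\cdot\nabla w_\ve\,dx$ accordingly.

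First I would expand the quadratic form by adding and subtracting $P_\ve\nabla w_0$. Using symmetry of $A_\ve$, the cross terms take the form $2\int_\Omega A_\ve P_\ve \nabla w_0 \cdot (\nabla w_\ve - P_\ve\nabla w_0)\,dx$, and the remaining terms are the ``main'' term $\int_\Omega {^tP}_\ve A_\ve P_\ve \nabla w_0\cdot\nabla w_0\,dx$ together with the manifestly nonnegative term $\int_\Omega A_\ve (\nabla w_\ve - P_\ve\nabla w_0)\cdot(\nabla w_\ve - P_\ve\nabla w_0)\,dx \ge 0$ by the ellipticity lower bound $\alpha|\xi|^2\le A_\ve\xi\cdot\xi$. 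For the main term, corrector property (c) gives ${^tP}_\ve A_\ve P_\ve \rightharpoonup A_0$ in the sense of distributions, so testing against the fixed function $\nabla w_0\cdot\nabla w_0$ (after a suitable smooth approximation of $w_0$, or by a localization/density argument) yields convergence to $\int_\Omega A_0\nabla w_0\cdot\nabla w_0\,dx$. For the cross term, property (b) gives $A_\ve P_\ve \rightharpoonup A_0$ weakly in $(\ltoo)^{n\times n}$, while $\nabla w_\ve - P_\ve\nabla w_0 \rightharpoonup \nabla w_0 - \nabla w_0 = 0$ weakly in $(\ltoo)^n$ by property (a) and $w_\ve\rightharpoonup w_0$; the product of a weakly convergent flux against a weakly-null quantity must be handled carefully, and I would arrange the pairing so that the weak-null factor is tested against a strongly-behaving or fixed object to force the cross term to vanish in the limit.

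Taking $\liminf$ and discarding the nonnegative quadratic remainder then gives exactly \eqref{eq:glltenergy}. I expect the main obstacle to be the rigorous justification of the cross-term vanishing and the distributional pairing in the main term, since $\nabla w_0\cdot\nabla w_0$ is merely an $L^1$ function rather than a smooth test function, so property (c) cannot be applied directly. The standard remedy is to first prove the estimate for $w_0\in C_0^\infty(\Omega)$ (or with $\nabla w_0$ replaced by a smooth matrix-valued test function), where the distributional convergence in (c) applies verbatim, and then pass to general $w_0\in\hoz$ by density together with the uniform $\beta$-bound on the quadratic forms, which provides the equicontinuity needed to interchange the density limit with the $\liminf$. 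An alternative route, which sidesteps correctors entirely, is to invoke the $\Gamma$-convergence characterization directly: the functionals $w\mapsto\int_\Omega A_\ve\nabla w\cdot\nabla w\,dx$ $\Gamma$-converge (with respect to weak $\hoz$ convergence) to $w\mapsto\int_\Omega A_0\nabla w\cdot\nabla w\,dx$, and inequality \eqref{eq:glltenergy} is precisely the $\Gamma$-liminf condition (i) from \S\ref{sec:prem}; this is the content of the cited \cite[Chapter 13]{dalmaso}, so the corrector computation above is really just a self-contained verification of that abstract fact.
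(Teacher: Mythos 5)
The paper offers no proof of this lemma at all: it is quoted directly from the theory of $\Gamma$-convergence of quadratic functionals (\cite[Chapter 13]{dalmaso}). So the closing paragraph of your proposal --- identifying \eqref{eq:glltenergy} as the $\Gamma$-liminf inequality for the energies $w\mapsto\int_\Omega A_\ve\nabla w\cdot\nabla w\,dx$, which $\Gamma$-converge with respect to weak $\hoz$ convergence to the energy of the $G$-limit --- is exactly the paper's treatment and is a complete justification at the level the paper intends.

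Your self-contained corrector computation, however, has a genuine gap at the point you yourself flag, and the fix is not the one you describe. First, for general $w_0\in\hoz$ the field $P_\ve\nabla w_0$ is only in $L^1(\Omega)^n$, so you are forced to replace $w_0$ by a smooth $\varphi$ before the decomposition even makes sense. Once you do, the cross term $\int_\Omega A_\ve P_\ve\nabla\varphi\cdot(\nabla w_\ve-P_\ve\nabla\varphi)\,dx$ is a pairing of two sequences each of which converges only \emph{weakly} in $(\ltoo)^n$; no rearrangement of the pairing produces a strongly convergent factor. What actually identifies its limit is compensated compactness: the columns $A_\ve P_\ve e_j=A_\ve\nabla\chi^j_\ve$ have divergences fixed by \eqref{eq:correcdef} (hence compact in $\hmo$), while $\nabla w_\ve$ and $\nabla\chi^i_\ve$ are curl-free, so the div--curl lemma gives the convergence of $A_\ve\nabla\chi^j_\ve\cdot\nabla w_\ve$ and of $A_\ve\nabla\chi^j_\ve\cdot\nabla\chi^i_\ve$ in $\mathcal{D}^\prime(\Omega)$ (the latter being the content of property (c)). You never invoke this, and without it the cross term cannot be controlled. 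Second, with $\varphi\ne w_0$ the cross term does \emph{not} vanish: it converges to $2\int_\Omega A_0\nabla\varphi\cdot(\nabla w_0-\nabla\varphi)\,dx$, and it must be retained, yielding the lower bound $2\int_\Omega A_0\nabla\varphi\cdot\nabla w_0\,dx-\int_\Omega A_0\nabla\varphi\cdot\nabla\varphi\,dx$ for the liminf, after which one lets $\varphi\to w_0$ in $\hoz$ to recover \eqref{eq:glltenergy}. As written (``the cross term vanishes, discard the nonnegative remainder'') the argument is circular, since the vanishing of the cross term would require using $P_\ve\nabla w_0\rightharpoonup\nabla w_0$ with precisely the non-smooth $w_0$ that you had to discard to make properties (b) and (c) applicable.
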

Given $\{B_\ve\}\subset\mcdomega$, a sequences of symmetric
matrices and $v_\ve$ solutions of \eqref{eq:soe}, we have from Lemma~\ref{lm:murat} that,
\begin{equation}\label{eq:orineq}
\liminf_{\ve \rightarrow 0}\int_\Omega B_\ve \nabla
v_\ve\cdot\nabla v_\ve\, dx \ge \int_\Omega B_0\nabla v_0
\cdot\nabla v_0\,dx,
\end{equation}
where $v_0$ is solution of \eqref{eq:limsoe} and $B_0$ is the
$H$-limit of $\{B_\ve\}$. Moreover, \eqref{eq:orineq} remains
valid when $v_\ve$ are solutions of \eqref{eq:gvardata}, where
$g_\ve$ converges strongly to $g$ in $\hmo$. A question of
interest is to obtain the best lower bound for the limit on the
left hand side of \eqref{eq:orineq} when $v_\ve$ is the
solution of \eqref{eq:gvardata} and $A_\ve$ $H$-converges to $A_0$. In order to state a result
improving \eqref{eq:orineq} we recall (from \S\ref{sec:hcon})
that $\{P_\ve\}$ is the sequence of corrector matrices associated
with $\{A_\ve\}$. Let $B^\sharp$ be the weak-* limit of $\{P^t_\ve
B_\ve P_\ve\}$ in $(\linfty)^{n\times n}$.
\begin{thm}[cf. \cite{skmr}]\label{thm:genenercon}
Let $g_\varepsilon\rightarrow g$ strongly in $\hmo$ and let $v_\varepsilon\in\hoz$ be the weak solution of 
\begin{equation}\label{eq:gvardata}
\left\{ \begin{array}{rll}
-\textup{div}(A_\ve\nabla v_\ve) &  =  g_\ve & \textup{in
$\Omega$}\\
v_\ve &  =  0  & \textup{on $\bdry$},
\end{array}\right.
\end{equation}
then
\begin{equation}\label{eq:bgcon}
\int_\Omega B_\ve \nabla v_\ve\cdot\nabla v_\ve\, dx \stackrel{\ve\to 0}{\longrightarrow}
\int_\Omega B^\sharp\nabla v_0 \cdot\nabla v_0\,dx
\end{equation}
where $v_0\in\hoz$ is the unique solution of
\eqref{eq:limsoe}.\qed
\end{thm}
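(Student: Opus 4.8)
The plan is to combine the corrector theory of $H$-convergence (see \S\ref{sec:hcon}) with the defining weak-$*$ convergence of $\{P_\varepsilon^t B_\varepsilon P_\varepsilon\}$. First I would reduce to the case of a fixed datum. Writing $w_\varepsilon\in\hoz$ for the solution of \eqref{eq:gvardata} with $g_\varepsilon$ replaced by the limit $g$, the difference $v_\varepsilon-w_\varepsilon$ solves the same equation with right-hand side $g_\varepsilon-g$, so by uniform ellipticity $\|\nabla(v_\varepsilon-w_\varepsilon)\|_{L^2}\le C\|g_\varepsilon-g\|_{\hmo}\to 0$. Since $\{\nabla v_\varepsilon\}$ and $\{\nabla w_\varepsilon\}$ are bounded in $(\ltoo)^n$, the symmetry of $B_\varepsilon$ and Cauchy--Schwarz give $\int_\Omega B_\varepsilon\nabla v_\varepsilon\cdot\nabla v_\varepsilon\,dx-\int_\Omega B_\varepsilon\nabla w_\varepsilon\cdot\nabla w_\varepsilon\,dx=\int_\Omega B_\varepsilon\nabla(v_\varepsilon-w_\varepsilon)\cdot\nabla(v_\varepsilon+w_\varepsilon)\,dx\to 0$, so it suffices to treat the fixed datum $g$, for which $w_\varepsilon\rightharpoonup v_0$ in $\hoz$ by $G$-convergence. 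For this fixed datum the corrector theorem furnishes the decomposition $\nabla v_\varepsilon=P_\varepsilon\nabla v_0+\rho_\varepsilon$ with $\rho_\varepsilon\to 0$ strongly in $(L^1(\Omega))^n$.

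Substituting this decomposition and again using the symmetry of $B_\varepsilon$, the energy splits as
\begin{equation*}
\int_\Omega B_\varepsilon\nabla v_\varepsilon\cdot\nabla v_\varepsilon\,dx=\int_\Omega(P_\varepsilon^t B_\varepsilon P_\varepsilon)\nabla v_0\cdot\nabla v_0\,dx+2\int_\Omega B_\varepsilon P_\varepsilon\nabla v_0\cdot\rho_\varepsilon\,dx+\int_\Omega B_\varepsilon\rho_\varepsilon\cdot\rho_\varepsilon\,dx.
\end{equation*}
The first term is the pairing of $P_\varepsilon^t B_\varepsilon P_\varepsilon$ against the matrix field $\nabla v_0\otimes\nabla v_0$, which lies in $(L^1(\Omega))^{n\times n}$ because $\nabla v_0\in(\ltoo)^n$; hence the defining weak-$*$ convergence $P_\varepsilon^t B_\varepsilon P_\varepsilon\rightharpoonup B^\sharp$ in $(\linfty)^{n\times n}$ delivers exactly the desired limit $\int_\Omega B^\sharp\nabla v_0\cdot\nabla v_0\,dx$. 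It remains only to show that the two error terms vanish.

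This last point is where the main obstacle lies, since the bare corrector convergence $\rho_\varepsilon\to 0$ holds only in $L^1$, which is far too weak to absorb a quadratic expression. I would resolve it by exploiting integrability already implicit in the hypothesis: a weak-$*$ limit in $L^\infty$ forces $\{P_\varepsilon^t B_\varepsilon P_\varepsilon\}$ to be bounded in $(\linfty)^{n\times n}$, and combined with $B_\varepsilon\ge cI$ this yields $c\,|P_\varepsilon(x)\xi|^2\le(P_\varepsilon^t B_\varepsilon P_\varepsilon\xi)\cdot\xi\le M|\xi|^2$ a.e., so that $\{P_\varepsilon\}$ is in fact bounded in $(\linfty)^{n\times n}$. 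With $\{P_\varepsilon\}$ uniformly bounded the corrector estimate upgrades to $\rho_\varepsilon\to 0$ strongly in $(\ltoo)^n$, and $P_\varepsilon\nabla v_0$ is bounded in $(\ltoo)^n$; Cauchy--Schwarz then gives $|\int_\Omega B_\varepsilon P_\varepsilon\nabla v_0\cdot\rho_\varepsilon\,dx|\le d\,\|P_\varepsilon\|_\infty\|\nabla v_0\|_{L^2}\|\rho_\varepsilon\|_{L^2}\to 0$ and $|\int_\Omega B_\varepsilon\rho_\varepsilon\cdot\rho_\varepsilon\,dx|\le d\,\|\rho_\varepsilon\|_{L^2}^2\to 0$, which completes the proof. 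I would remark that in the periodic setting of the paper the whole statement also follows directly from the two-scale tools: one has $\nabla v_\varepsilon\stackrel{2\text{s}}{\rightarrow}P(y)\nabla v_0(x)$ \emph{strongly}, since the energy convergence \eqref{eq:energy} promotes the weak two-scale limit (identified through the cell corrector $P$) to a strong one, and Theorem~\ref{thm:strweak} then yields $\int_\Omega B_\varepsilon\nabla v_\varepsilon\cdot\nabla v_\varepsilon\,dx\to\int_{\Omega\times Y}B(y)P(y)\nabla v_0\cdot P(y)\nabla v_0\,dy\,dx=\int_\Omega B^\sharp\nabla v_0\cdot\nabla v_0\,dx$, with $B^\sharp=\int_Y P^t B P\,dy$; this route has the advantage of accommodating the case where the correctors are only in $L^2_{\text{per}}(Y)$.
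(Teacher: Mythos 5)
The paper does not actually prove this statement: it is imported from \cite{skmr} and closed with a \qed{} in the statement itself, so there is no in-paper argument to compare against. The proof in that line of work (Kesavan--Saint Jean Paulin, Kesavan--Rajesh) proceeds quite differently from yours: one introduces an adjoint state $p_\ve$ solving $-\mathrm{div}({}^tA_\ve\nabla p_\ve)=-\mathrm{div}(B_\ve\nabla v_\ve)$, rewrites the energy as $\int_\Omega A_\ve\nabla v_\ve\cdot\nabla p_\ve\,dx=\langle g_\ve,p_\ve\rangle$, and homogenizes the coupled system by compensated compactness, with $B^\sharp$ emerging from the limit adjoint equation. Your route --- reduction to fixed data, the corrector expansion $\nabla v_\ve=P_\ve\nabla v_0+\rho_\ve$, and identification of the leading term through the weak-$*$ convergence of ${}^tP_\ve B_\ve P_\ve$ --- is more direct, and your key observation (weak-$*$ convergence in $(\linfty)^{n\times n}$ forces a uniform $L^\infty$ bound on ${}^tP_\ve B_\ve P_\ve$, hence on $P_\ve$ by the coercivity of $B_\ve$, which upgrades $\rho_\ve\to0$ from $L^1$ to $L^2$ and kills the quadratic error terms) is exactly the right lever. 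Under the literal reading of the sentence defining $B^\sharp$, your argument is correct.

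The caveat is that this literal reading is doing real work. In the general $H$-convergence framework of \S\ref{sec:hcon} the correctors are only bounded in $(\ltoo)^{n\times n}$, so ${}^tP_\ve B_\ve P_\ve$ is merely bounded in $L^1(\Omega)$ and converges in $[\mathcal{D}'(\Omega)]^{n\times n}$ (compare property (c) of the correctors); under that weaker convergence both your passage to the limit in the leading term (since $\nabla v_0\otimes\nabla v_0$ is only an $L^1$ test field) and your $L^\infty$ bound on $P_\ve$ collapse, which is precisely why the adjoint-state detour is taken in \cite{skmr}. So you have proved the theorem under an additional boundedness hypothesis that the paper's phrasing happens to supply but that is not available in general. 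Two smaller points: the $L^2$ upgrade of the corrector estimate is usually only local, and globalizing it over $\Omega$ needs either the boundary-fitted correctors \eqref{eq:correcdef} or an equi-integrability argument near $\bdry$; and in your closing two-scale remark the cell corrector entering \eqref{eq:bijentry} is $P(x,y)+I$, not $P(x,y)$.
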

Comparing \eqref{eq:bgcon} with \eqref{eq:orineq} we observe that
$B^\sharp \ge B_0$. If $B_\ve=A_\ve$, then we have
\mbox{$B^\sharp=B_0=A_0$}. For
the properties of $B^\sharp$ we refer to \cite{skmr}.

\begin{rmrk}\label{rm:econ}
For $g_\ve$ weakly converging to $g$ in $\hmo$, although $v_\ve$
converges weakly in $\hoz$ (up to a subsequence) one can still not
conclude that \eqref{eq:orineq} holds. In fact, even the convergence
of the energies as in \eqref{eq:energy} is not valid.\qed
\end{rmrk}

The above remark motivates us to make the following conjecture:
\begin{conj}\label{conj:gllt}
Let $g_\varepsilon\rightharpoonup g$ weakly in $\hmo$ and let
$v_\varepsilon\in\hoz$, the weak solution of \eqref{eq:gvardata},
be such that $v_\ve \rightharpoonup v_0$ weakly in
$\hoz$, where $v_0\in\hoz$ is the unique solution of
\eqref{eq:limsoe}. Then 
\begin{equation}\label{eq:gllt}
\liminf_{\ve \rightarrow 0}\int_\Omega B_\ve \nabla
v_\ve\cdot\nabla v_\ve\, dx \ge \int_\Omega B^\sharp\nabla v_0
\cdot\nabla v_0\,dx.
\end{equation}
\qed
\end{conj}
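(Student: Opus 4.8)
The natural plan is to pass to the two-scale limit in the energy and reduce \eqref{eq:gllt} to a cell-problem inequality. Since $\veps\rightharpoonup v_0$ in $\hoz$, the sequence $\{\nabla\veps\}$ is bounded in $(\ltoo)^n$, so by the two-scale compactness theorem there is $v_1\in L^2[\Omega;H^1_{\textup{per}}(Y)]$ with $\nabla\veps\stackrel{2\text{s}}{\rightharpoonup}\nabla v_0+\nabla_y v_1$, along a subsequence. In the periodic case $B_\ve(x)=B(x/\ve)$, so applying Proposition~\ref{prop:convexlsc} to $j(y,\xi):=B(y)\xi\cdot\xi$ (which is $Y$-periodic, convex in $\xi$ and satisfies $c|\xi|^2\le j\le d(1+|\xi|^2)$) yields
\[
\liminf_{\ve\to0}\int_\Omega B_\ve\nabla\veps\cdot\nabla\veps\,dx\ge\int_{\Omega\times Y}B(y)(\nabla v_0+\nabla_y v_1)\cdot(\nabla v_0+\nabla_y v_1)\,dy\,dx.
\]
It therefore suffices to bound the right-hand side below by $\int_\Omega B^\sharp\nabla v_0\cdot\nabla v_0\,dx$.

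Next I would bring in the $A$-corrector. Writing $P(y)=I+\nabla_y\chi(y)$ for the periodic corrector coming from \eqref{eq:correcdef}, one has $P_\ve(x)=P(x/\ve)$ and, in the periodic setting, $B^\sharp=\int_Y P^t(y)B(y)P(y)\,dy$. I decompose the two-scale limit as $\nabla v_0+\nabla_y v_1=P(y)\nabla v_0+\nabla_y z_1$, where $\nabla_y z_1$ records the oscillation of $\nabla\veps$ not already explained by the corrector. Expanding the quadratic form and using $\int_Y P^tBP=B^\sharp$, the right-hand side equals
\[
\int_\Omega B^\sharp\nabla v_0\cdot\nabla v_0\,dx+2\int_{\Omega\times Y}B(y)P(y)\nabla v_0\cdot\nabla_y z_1\,dy\,dx+\int_{\Omega\times Y}B(y)\nabla_y z_1\cdot\nabla_y z_1\,dy\,dx.
\]
The last term is nonnegative because $B\ge cI$, so \eqref{eq:gllt} would follow once the cross term is controlled.

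The cross term is where the constraint carried by the state equation must enter. Testing \eqref{eq:gvardata} against oscillating functions $\ve\phi(x)\psi(x/\ve)$ and passing to the two-scale limit, one extracts the cell relation satisfied by $v_1$; combined with the hypothesis $\veps\rightharpoonup v_0$ (so that $z_1$ has no macroscopic part and $\textup{div}_x\int_Y A\nabla_y z_1\,dy=0$), this is meant to pin down the admissible class of oscillations $z_1$. The plan is then to replace the right-hand side by its infimum over this admissible class and to evaluate that infimum by convex duality: using Fenchel's inequality \eqref{eq:conconj} to dualise the quadratic form and Lemma~\ref{lm:commute} to interchange the infimum with the integral, thereby reducing the global minimisation to a pointwise cell minimisation whose value I would try to identify with $B^\sharp\nabla v_0\cdot\nabla v_0$.

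The main obstacle is precisely this cross term, and it stems from the mismatch between the operator matrix $A_\ve$ and the energy matrix $B_\ve$. The corrector satisfies $\textup{div}_y(AP)=0$, whence $\int_Y AP\nabla v_0\cdot\nabla_y z_1\,dy=0$ automatically; but the cross term involves $BP$, and $\textup{div}_y(BP)\ne0$ in general, so the integration by parts that would kill it is unavailable. Equivalently, an \emph{unconstrained} minimisation over $z_1$ returns only the weaker bound $\int_\Omega B_0\nabla v_0\cdot\nabla v_0$ of \eqref{eq:orineq}, so everything hinges on showing that weak-$\hmo$ data cannot excite oscillations $z_1$ transverse to the $A$-corrector directions (which would force $z_1=0$ and give equality). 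Controlling which $z_1$ are genuinely attainable by solutions $\veps$ with $\veps\rightharpoonup v_0$, as opposed to arbitrary periodic fields, is the delicate point, and it is why the statement is recorded here as a conjecture; under strong convergence $g_\ve\to g$ the difficulty disappears and one recovers Theorem~\ref{thm:genenercon}.
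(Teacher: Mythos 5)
You have correctly recognised that the statement, as posed, is a conjecture: the paper itself does not prove \eqref{eq:gllt} for arbitrary weakly converging data $g_\ve$, and establishes it only in the periodic case under the additional hypothesis of Theorem~\ref{thm:wkdt}, namely that $\ve^\gamma g_\ve$ is bounded in $\ltoo$ for some $\gamma<1$ (this is Proposition~\ref{prop:gllt}). Your first two steps --- two-scale compactness of $\nabla v_\ve$ and lower semicontinuity of the convex energy via Proposition~\ref{prop:convexlsc} --- are exactly the paper's, and your diagnosis of the obstruction is accurate: everything reduces to showing that the microscopic oscillation $v_1$ is the one generated by the $A$-corrector, i.e.\ $\nabla_y v_1=P(x,y)\nabla v_0$ (your $z_1=0$), since the cross term $\int_{\Omega\times Y}B(P+I)\nabla v_0\cdot\nabla_y z_1\,dy\,dx$ cannot be integrated away when $B\ne A$.

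Where your account should be sharpened: the difficulty is not best described as controlling ``which $z_1$ are attainable'' by a constrained minimisation followed by convex duality. The point is that testing \eqref{eq:gvardata} against $\ve\phi(x)\psi(x/\ve)$ produces the clean cell equation $-\textup{div}_y\bigl(A(x,y)[\nabla v_0+\nabla_y v_1]\bigr)=0$ only if the pairing of $g_\ve$ with these oscillating test functions vanishes in the limit; for general $g_\ve\rightharpoonup g$ in $\hmo$ it need not, and the cell relation itself is contaminated by the data. Under the hypothesis $\|\ve^\gamma g_\ve\|_2\le C$ with $\gamma<1$ that pairing is $O(\ve^{1-\gamma})$, the cell problem \eqref{eq:celleqn} is recovered, and its uniqueness (up to constants in $y$) forces $v_1=\sum_i\chi_i\,\partial v_0/\partial x_i$ outright --- no duality and no infimum over an admissible class is needed; the paper simply quotes this identification from the proof of Theorem~\ref{thm:wkdt} and concludes by \eqref{eq:bijentry}. (The Fenchel/Lemma~\ref{lm:commute} machinery appears in the paper only to rederive the lower semicontinuity inequality \eqref{eq:lsconvx}, not to handle the cross term.) So your plan coincides with the paper's on the provable special case and correctly locates why the general statement remains open, but the final ``convex duality over admissible $z_1$'' step is a detour that, as you yourself note, you cannot close --- and for general weak data no one currently can.
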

The conjecture is open,
in general. However, in this section we prove the conjecture for
the periodic case with additional hypothesis on the data $g_\ve$.

To do so, we recall the periodic set up.
Let $Y=(0,1)^n$ be the reference cell in $\rn$. Let $A=A(x,y)\in
M(\alpha,\beta,\Omega\times Y)$, $Y$-periodic in $y$.
We assume that $A(x,y)=(a_{ij}(x,y))$ is in the class of
admissible functions. The corrector functions $\chi_i$, for $1\le i \le n$, is defined as
the solution of the cell problem
\begin{equation}\label{eq:celleqn}
\left\{ \begin{array}{rll}
-\textup{div}_y\left(A(x,y)[\nabla_y \chi_i(x,y) + e_i]\right) &  =
0 & \textup{in $Y$}\\
y \mapsto \chi_i(x,y)  &  & \textup{is $Y$-periodic},
\end{array}\right.
\end{equation}
where $\{e_1,\cdots,e_n\}$ is the standard basis of
$\mathbb{R}^n$. Let us define the corrector matrix as follows;
$P(x,y)e_i=\nabla_y\chi_i(x,y)$. It has been shown that $A_\ve$
$H$-converges to $A_0$ which is given by,
\begin{equation}\label{eq:ijentry}
(A_0)_{ij} = \int_Y A(x,y)(P(x,y)e_i + e_i)\cdot (P(x,y)e_j+e_j)\,dy.
\end{equation}
Let $B=B(x,y)\in M(c,d,\Omega\times Y)$
belong to the admissible class of functions. Assume $B$ is
symmetric. Let us now define $B^\sharp$ to be the matrix whose $ij^\text{th}$ entry is given by,
\begin{equation}\label{eq:bijentry}
(B^\sharp)_{ij} = \int_Y B(x,y)(P(x,y)e_i+e_i)\cdot (P(x,y)
e_j+e_j)\,dy
\end{equation}
We shall now recall a $H$-convergence result for weak
data proved in \cite{tmkakn} using two-scale convergence.
\begin{thm}[{\cite[Theorem 2.1]{tmkakn}}]\label{thm:wkdt}
Let $\gamma<1$ be a fixed real number. Let $v_\varepsilon\in\hoz$
be the weak solution of 
\[
\left\{ \begin{array}{rll}
-\textup{div}(\aepsper\nabla v_\ve) &  =  g_\ve & \textup{in
$\Omega$}\\
v_\ve &  =  0  & \textup{on $\bdry$},
\end{array}\right.
\]
where $g_\ve\in\ltoo$ is such that $g_\varepsilon\rightharpoonup
g$ weakly in $\hmo$ and $\ve^\gamma g_\varepsilon$ is bounded in
$\ltoo$. Then,  
\[
\left.
\begin{array}{r}
v_\ve \rightharpoonup v_0 \textup{ weakly in } \hoz\\
\aeps\nabla v_\ve \rightharpoonup A_0\nabla v_0 \textup{ weakly
in } (L^2(\Omega))^n.
\end{array}
\right\}
\]
is satisfied, where $v_0\in\hoz$ is the unique
solution of \eqref{eq:limsoe} and $A_0$ is as given in \eqref{eq:ijentry}.\qed
\end{thm}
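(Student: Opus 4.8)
The plan is to combine the standard a priori energy estimate with the two--scale convergence method, passing to the limit against oscillating test functions and exploiting the growth bound ``$\ve^\gamma g_\ve$ bounded in \ltoo'' to neutralise the single term in which the merely weak (rather than strong) convergence of the data would obstruct the limit.

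First I would bound $\{v_\ve\}$ in \hoz. Testing the state equation with $v_\ve$ and using $\alpha|\xi|^2\le A\xi\cdot\xi$ gives $\alpha\|\nabla v_\ve\|_2^2\le\langle g_\ve,v_\ve\rangle\le\|g_\ve\|_{\hmo}\|v_\ve\|_{\hoz}$; since $g_\ve\rightharpoonup g$ in \hmo\ the quantities $\|g_\ve\|_{\hmo}$ are bounded, so $\|v_\ve\|_{\hoz}\le C$ (note this step uses only the weak convergence of the data). By the two--scale compactness theorem I extract a subsequence with $v_\ve\stackrel{2\text{s}}{\rightharpoonup}v_0\in\hoz$, independent of $y$, and $\nabla v_\ve\stackrel{2\text{s}}{\rightharpoonup}\nabla v_0+\nabla_y v_1$ for some $v_1\in L^2[\Omega;H^1_{\text{per}}(Y)]$.

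Next I would take the weak formulation $\int_\Omega\aepsper\nabla v_\ve\cdot\nabla\phi_\ve\,dx=\langle g_\ve,\phi_\ve\rangle$ with the oscillating test functions $\phi_\ve(x)=\phi_0(x)+\ve\phi_1(x,x/\ve)$, where $\phi_0\in C^\infty_0(\Omega)$ and $\phi_1\in\mathcal{D}(\Omega,C^\infty_{\text{per}}(Y))$. Since $\nabla\phi_\ve\stackrel{2\text{s}}{\rightarrow}\nabla\phi_0+\nabla_y\phi_1$ strongly (the leftover $\ve\nabla_x\phi_1$ is $O(\ve)$) and $A$ is admissible, $\aepsper^{t}\nabla\phi_\ve$ two--scale converges strongly; writing the integrand as $\nabla v_\ve\cdot\aepsper^{t}\nabla\phi_\ve$ and invoking Theorem \ref{thm:strweak} sends the left side to $\int_\Omega\int_Y A(x,y)(\nabla v_0+\nabla_y v_1)\cdot(\nabla\phi_0+\nabla_y\phi_1)\,dy\,dx$. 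The right side I split as $\langle g_\ve,\phi_0\rangle+\ve\int_\Omega g_\ve\,\phi_1(x,x/\ve)\,dx$: the first term tends to $\langle g,\phi_0\rangle$ by weak convergence in \hmo, while the second is the crux of the argument. Here the weak datum cannot be paired against the non--vanishing oscillation $\phi_1(x,x/\ve)$, and the hypothesis is precisely what rescues it: $|\ve\int_\Omega g_\ve\phi_1(x,x/\ve)\,dx|\le\ve\|g_\ve\|_2\,\|\phi_1(\cdot,\cdot/\ve)\|_2\le C\ve\cdot\ve^{-\gamma}=C\ve^{1-\gamma}\to 0$ as $\gamma<1$. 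I expect this estimate to be the main obstacle conceptually, since it is the only place where the $L^2$ growth control on $g_\ve$ is indispensable, and it explains the role of the exponent $\gamma<1$.

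Finally I would decouple the two--scale limit system. Taking $\phi_0=0$ yields, for a.e.\ $x$, the weak cell problem $\int_Y A(x,y)(\nabla v_0+\nabla_y v_1)\cdot\nabla_y\phi_1\,dy=0$, which by linearity and \eqref{eq:celleqn} forces $\nabla_y v_1=P(x,y)\nabla v_0$. Taking $\phi_1=0$ and substituting this gives $\int_\Omega\left(\int_Y A(x,y)(I+P(x,y))\,dy\right)\nabla v_0\cdot\nabla\phi_0\,dx=\langle g,\phi_0\rangle$; the cell--problem orthogonality $\int_Y A(x,y)(e_i+P e_i)\cdot P e_j\,dy=0$ identifies the effective matrix with $A_0$ of \eqref{eq:ijentry}, so $v_0$ solves \eqref{eq:limsoe}. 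Uniqueness for \eqref{eq:limsoe} makes the limit independent of the subsequence, so the whole sequence converges, proving the first assertion. For the flux, $\{\aeps\nabla v_\ve\}$ is bounded in $(\ltoo)^n$ and, by the same strong/weak two--scale pairing, two--scale converges to $A(x,y)(I+P(x,y))\nabla v_0$; averaging over $Y$ identifies its weak $L^2$ limit as $A_0\nabla v_0$, which is the second assertion.
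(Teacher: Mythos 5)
Your argument is correct and follows essentially the same route as the source: the paper states this result without proof, citing \cite{tmkakn} where it is "proved using two-scale convergence," and the structure it alludes to (the a priori bound from weak $\hmo$-convergence, the oscillating test functions $\phi_0+\ve\phi_1(x,x/\ve)$, the identification $v_1=\sum_i\chi_i\,\partial v_0/\partial x_i$ so that $\nabla_y v_1=P\nabla v_0$) is exactly what you reconstruct. You also correctly isolate the one genuinely delicate point, namely that the bound $\|g_\ve\|_2\le C\ve^{-\gamma}$ with $\gamma<1$ is what kills the term $\ve\langle g_\ve,\phi_1(\cdot,\cdot/\ve)\rangle$ that weak $\hmo$-convergence alone cannot handle.
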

We now prove a version of Conjecture \ref{conj:gllt} for the
periodic case.
\begin{prop}\label{prop:gllt}
If $g_\ve, g$ and $v_\ve$ satisfy the hypothesis as in Theorem
\ref{thm:wkdt}, then the inequality, as given in \eqref{eq:gllt},
holds for $B^\sharp$ as given in \eqref{eq:bijentry}.
\end{prop}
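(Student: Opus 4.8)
The plan is to pass to the limit by two-scale convergence and then invoke the convex lower-semicontinuity property of Proposition~\ref{prop:convexlsc}. The heart of the matter is to identify the two-scale limit of the gradients $\nabla\veps$ as $(I+P(x,y))\nabla v_0(x)$, where $P$ is the periodic corrector matrix defined through \eqref{eq:celleqn}. Once this is in hand, applying Proposition~\ref{prop:convexlsc} to the convex integrand $\xi\mapsto B(x,y)\xi\cdot\xi$ and then integrating out the fast variable against the definition \eqref{eq:bijentry} of $B^\sharp$ yields \eqref{eq:gllt} immediately.

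\emph{Step 1 (two-scale limit of the gradients).} Since $\veps\rightharpoonup v_0$ weakly in $\hoz$ by Theorem~\ref{thm:wkdt}, the sequence $\{\veps\}$ is bounded in $\hoz$; hence, by the two-scale compactness theorem for bounded $H^1$ sequences (\S\ref{sec:prem}), up to a subsequence there is $v_1\in L^2[\Omega;H^1_{\textup{per}}(Y)]$ with
\[
\nabla\veps \stackrel{2\text{s}}{\rightharpoonup} \nabla v_0 + \nabla_y v_1.
\]
To identify $v_1$ I would test \eqref{eq:gvardata} against $\phi(x)+\ve\,\phi_1\!\left(x,\tfrac{x}{\ve}\right)$ with $\phi\in\mathcal{D}(\Omega)$ and $\phi_1\in\mathcal{D}(\Omega,C^\infty_{\textup{per}}(Y))$, and pass to the two-scale limit. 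The decisive point, and where the standing hypothesis enters, is the right-hand side term $\langle g_\ve,\ve\,\phi_1(\cdot,\cdot/\ve)\rangle$: writing it as $\ve^{1-\gamma}\int_\Omega(\ve^\gamma g_\ve)\,\phi_1(x,x/\ve)\,dx$ and using that $\ve^\gamma g_\ve$ is bounded in $\ltoo$ while $\gamma<1$, it is $O(\ve^{1-\gamma})\to 0$. Taking $\phi=0$ then leaves the cell identity $\int_{\Omega\times Y} A(x,y)(\nabla v_0+\nabla_y v_1)\cdot\nabla_y\phi_1\,dy\,dx=0$ for all admissible $\phi_1$, which is precisely the weak form of \eqref{eq:celleqn} solved by $v_1(x,y)=\sum_i \chi_i(x,y)\,\partial_{x_i}v_0(x)$. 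Consequently $\nabla_y v_1=P(x,y)\nabla v_0$ and $\nabla v_0+\nabla_y v_1=(I+P(x,y))\nabla v_0$.

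\emph{Step 2 (lower bound and computation).} Because $B\in M(c,d,\Omega\times Y)$ is symmetric, the integrand $j(x,y,\xi)=B(x,y)\xi\cdot\xi$ is convex in $\xi$ and obeys $c|\xi|^2\le j\le d(1+|\xi|^2)$, so Proposition~\ref{prop:convexlsc} (in its form allowing a measurable dependence on the slow variable $x$) applies to the vector sequence $\nabla\veps$, giving
\[
\liminf_{\ve\to 0}\int_\Omega B_\ve\nabla\veps\cdot\nabla\veps\,dx \ge \int_{\Omega\times Y} B(x,y)(I+P)\nabla v_0\cdot(I+P)\nabla v_0\,dy\,dx.
\]
For fixed $x$, expanding $(I+P)\nabla v_0=\sum_i \partial_{x_i}v_0\,(Pe_i+e_i)$ and integrating over $Y$ reproduces, by \eqref{eq:bijentry}, the quadratic form $B^\sharp(x)\nabla v_0\cdot\nabla v_0$; integrating over $\Omega$ gives the right-hand side of \eqref{eq:gllt}. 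Applying the argument to a subsequence realizing the lower limit, and noting that $v_0$ is uniquely determined by \eqref{eq:limsoe} independently of the subsequence, yields the inequality for the full sequence.

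\emph{Step 3 (the main obstacle).} The genuinely delicate step is Step~1: showing that the emergent $v_1$ is exactly the periodic corrector. This rests on discarding the oscillating test contribution of $g_\ve$, and it is here, and only here, that the quantitative hypothesis ``$\ve^\gamma g_\ve$ bounded with $\gamma<1$'' is indispensable---for merely weakly convergent $g_\ve$ (Remark~\ref{rm:econ}) the cell equation, and with it the identification of the two-scale limit, may fail. A secondary technical point is invoking Proposition~\ref{prop:convexlsc} with the slow-variable dependence carried by $B(x,y)$; this is standard but should be flagged, since the proposition is stated for integrands $j(y,\xi)$.
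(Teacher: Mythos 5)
Your proposal is correct and follows essentially the same route as the paper: two-scale compactness of $\nabla\veps$, identification of $v_1$ via the periodic corrector so that $\nabla_y v_1=P(x,y)\nabla v_0$, and then the convex lower-semicontinuity bound of Proposition~\ref{prop:convexlsc} combined with the definition \eqref{eq:bijentry} of $B^\sharp$. The only difference is that you re-derive the identification of $v_1$ (including the role of the hypothesis that $\ve^\gamma g_\ve$ is bounded), whereas the paper simply cites this from the proof of Theorem~\ref{thm:wkdt}; your remark about the slow-variable dependence in Proposition~\ref{prop:convexlsc} is also addressed in the paper by a separate self-contained lemma using Fenchel duality.
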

\begin{proof}
Since, $g_\ve$ weakly converges to $g$ in $\hmo$, we have
$v_\ve$ bounded in $\hoz$. Thus, by the compactness of two-scale
convergence, there exists $v_0\in\hoz$ and $v_1\in L^2[\Omega;
H^1_{\text{per}}(Y)]$ such that 
\[
\nabla v_\ve \stackrel{2\text{s}}{\rightharpoonup} \nabla
v_0+\nabla_y v_1(x,y).
\]
Moreover, by Proposition \ref{prop:convexlsc}, we have
\begin{equation}\label{eq:lsconvx}
\liminf_{\ve\to 0}\int_\Omega \beps\nabla v_\ve\cdot\nabla v_\ve\,
dx  \ge   \int_{\Omega\times Y}B(x,y)\left[\nabla v_0 +
\nabla_y v_1\right]\cdot\left[\nabla v_0 + \nabla_y v_1\right]
\,dy\,dx.
\end{equation}
It was shown in the proof of Theorem \ref{thm:wkdt} that,
\[
v_1(x,y)=\sum_{i=1}^n \chi_i(x,y)\frac{\partial v_0}{\partial
x_i}(x)
\]
and therefore, $\nabla_y v_1(x,y) = P(x,y)\nabla v_0$.
Thus,
\[
\liminf_{\ve\to 0}\int_\Omega \beps\nabla v_\ve\cdot\nabla v_\ve\, dx
\ge  \int_{\Omega\times Y}B(x,y) (P(x,y)+I)\nabla v_0\cdot
(P(x,y)+I)\nabla v_0 \,dy\,dx
\]
and by using \eqref{eq:bijentry}, we have
\[
\liminf_{\ve\to 0}\int_\Omega \beps\nabla v_\ve\cdot\nabla v_\ve\, dx
\ge \int_\Omega B^\sharp \nabla v_0\cdot \nabla v_0\,dx
\]
Thus, we have shown \eqref{eq:gllt} in the periodic case with
$B^\sharp$ as defined in \eqref{eq:bijentry}.
\end{proof}
To keep the proof of above proposition self-contained, to an
extent, we derive in the following lemma the inequality \eqref{eq:lsconvx} using some convex analysis
arguments. We follow the line of argument as given in the
proof of \cite[Proposition 2.5]{bouchfragala}.
\begin{lemma}
Let $\nabla v_\ve \stackrel{2\textup{s}}{\rightharpoonup} \nabla
v_0+\nabla_y v_1(x,y)$ where $v_0\in\hoz$ and $v_1\in L^2[\Omega;
H^1_{\textup{per}}(Y)]$ and let $B(x,y)\in M(c,d,\Omega\times Y)$,
$Y$-periodic in $y$, be a symmetric matrix, then \eqref{eq:lsconvx} is valid.
\end{lemma}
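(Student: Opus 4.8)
The plan is to exploit the convexity of the quadratic form $\xi\mapsto B(x,y)\xi\cdot\xi$, which holds because $B$ is symmetric and positive definite, in order to replace the integrand by an affine minorant and thereby reduce the lower-semicontinuity inequality \eqref{eq:lsconvx} to a statement about convergence of products of two-scale converging sequences. The elementary fact I would use is that, for a symmetric positive matrix $M$,
\[
M\xi\cdot\xi \ge 2 M\zeta\cdot\xi - M\zeta\cdot\zeta \qquad \text{for all } \xi,\zeta\in\rn,
\]
which expresses that the tangent plane to $\xi\mapsto M\xi\cdot\xi$ at $\zeta$ lies below the graph, with equality exactly when $\zeta=\xi$. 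This linearization is precisely the mechanism used in the proof of \cite[Proposition 2.5]{bouchfragala}.

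First I would fix an admissible oscillating test field, taking $\zeta=\zeta(x,y)$ in $\mathcal{D}[\Omega,C^\infty_{\textup{per}}(Y)]^n$ and writing $\zeta_\ve(x):=\zeta(x,x/\ve)$. Applying the inequality above pointwise with $M=\beps$, $\xi=\nabla v_\ve(x)$ and the vector $\zeta_\ve(x)$, then integrating over $\Omega$, gives
\[
\int_\Omega \beps\nabla v_\ve\cdot\nabla v_\ve\,dx \ge 2\int_\Omega \beps\,\zeta_\ve\cdot\nabla v_\ve\,dx - \int_\Omega \beps\,\zeta_\ve\cdot\zeta_\ve\,dx.
\]
I would then pass to the limit on the right. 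The field $\beps\zeta_\ve$ is a finite sum of products of admissible functions, hence strongly two-scale converges to $B(x,y)\zeta(x,y)$, while $\nabla v_\ve$ weakly two-scale converges to $\nabla v_0+\nabla_y v_1$ by hypothesis; so Theorem~\ref{thm:strweak}, applied componentwise, yields
\[
\int_\Omega \beps\,\zeta_\ve\cdot\nabla v_\ve\,dx \longrightarrow \int_{\Omega\times Y} B(x,y)\zeta\cdot(\nabla v_0+\nabla_y v_1)\,dy\,dx,
\]
and the quadratic term converges, by strong two-scale convergence of $\beps\zeta_\ve\cdot\zeta_\ve$, to $\int_{\Omega\times Y}B\zeta\cdot\zeta\,dy\,dx$. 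Taking $\liminf_{\ve\to0}$ gives, for every such $\zeta$,
\[
\liminf_{\ve\to0}\int_\Omega \beps\nabla v_\ve\cdot\nabla v_\ve\,dx \ge \int_{\Omega\times Y}\bigl[2B\zeta\cdot(\nabla v_0+\nabla_y v_1)-B\zeta\cdot\zeta\bigr]\,dy\,dx.
\]

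It remains to optimize the right-hand side over $\zeta$. For fixed $(x,y)$ the integrand $2B\zeta\cdot\Xi-B\zeta\cdot\zeta$, with $\Xi:=\nabla v_0(x)+\nabla_y v_1(x,y)$, is a concave quadratic in $\zeta$ maximized at $\zeta=\Xi$, where its value is $B\Xi\cdot\Xi$; integrating this optimal value reproduces exactly the right-hand side of \eqref{eq:lsconvx}. The step I expect to be the only genuine obstacle is that the optimal field $\Xi$ lies merely in $L^2(\Omega\times Y)^n$, whereas the inequality was established only for smooth admissible $\zeta$. I would close this gap by density: choosing $\zeta_k\in\mathcal{D}[\Omega,C^\infty_{\textup{per}}(Y)]^n$ with $\zeta_k\to\Xi$ in $L^2(\Omega\times Y)^n$ and using the boundedness of $B$, so that $\zeta\mapsto\int_{\Omega\times Y}[2B\zeta\cdot\Xi-B\zeta\cdot\zeta]$ is continuous on $L^2(\Omega\times Y)^n$, the supremum over admissible $\zeta$ equals the unconstrained maximum $\int_{\Omega\times Y}B\Xi\cdot\Xi\,dy\,dx$, which is \eqref{eq:lsconvx}. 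A minor technical point implicit in Theorem~\ref{thm:strweak} is the localizing cutoff $\tau$; since $\Omega$ is bounded one recovers the case $\tau\equiv1$ by taking $\tau_j\in C_0^\infty(\Omega)$ increasing to $1$ and passing to the limit, the $L^\infty$ bound on $B$ together with the $L^2$-boundedness of $\{\nabla v_\ve\}$ supplying the uniform control.
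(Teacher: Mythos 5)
Your proof is correct, and it is in substance the same linearization strategy the paper uses (both follow \cite[Proposition 2.5]{bouchfragala}): minorize the quadratic form by an affine function of the gradient, pass to the two-scale limit against smooth admissible test fields via Theorem~\ref{thm:strweak}, and then optimize over the test field. The two arguments differ only in parametrization and in the final optimization step. The paper works with the dual variable $\Phi$ through Fenchel's inequality, writing $\beps\nabla v_\ve\cdot\nabla v_\ve \ge 2\nabla v_\ve\cdot\Phi - B^{-1}\Phi\cdot\Phi$, and then identifies the supremum over $\Phi$ with the biconjugate $B\,\Xi\cdot\Xi$ by invoking the commutation Lemma~\ref{lm:commute} to interchange the supremum and the integral; this requires checking that $B^{-1}\Phi$ is admissible. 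You work with the primal variable $\zeta$ (your inequality is the paper's with $\Phi=B\zeta$), which only requires $B\zeta$ admissible and thus avoids any discussion of $B^{-1}$, and you replace the commutation lemma by the elementary observation that $\zeta\mapsto\int_{\Omega\times Y}\bigl[2B\zeta\cdot\Xi - B\zeta\cdot\zeta\bigr]\,dy\,dx$ is continuous and concave on $L^2(\Omega\times Y)^n$ with maximizer $\zeta=\Xi$, so its supremum over the dense class of smooth fields already equals $\int_{\Omega\times Y}B\,\Xi\cdot\Xi\,dy\,dx$. This buys a more self-contained argument at no cost in generality for the quadratic case (the commutation lemma is the right tool for general convex integrands, where no explicit maximizer is available). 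One small simplification for your handling of the cutoff $\tau$ in Theorem~\ref{thm:strweak}: since $\zeta\in\mathcal{D}[\Omega;C^\infty_{\textup{per}}(Y)]^n$ has compact support in $x$, you may simply pick $\tau\equiv 1$ on that support, so that $\tau\zeta_\ve=\zeta_\ve$; no exhaustion argument is needed.
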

\begin{proof}
Let $\Phi\in \left(\mathcal{D}[\Omega;
C^\infty_{\text{per}}(Y)]\right)^n$.
Then, by Fenchel's inequality (for quadratic forms),
\begin{eqnarray*}
I_\ve := \frac{1}{2}\int_\Omega \beps\nabla v_\ve\cdot\nabla v_\ve\, dx
& \ge & \int_\Omega \nabla v_\ve\cdot\Phieps \,dx\\
& &  - \frac{1}{2}\int_\Omega
B^{-1}\left(x,\frac{x}{\varepsilon}\right)\Phieps\cdot\Phieps\,dx
\end{eqnarray*}
where $B^{-1}$ denotes the inverse of $B$ (which exists). We take
$\liminf$ on both sides of the inequality. By two-scale
convergence of $v_\ve$ the first term on right hand side becomes,
\[
\lim_{\ve\to 0}\int_\Omega \nabla v_\ve\cdot\Phieps \,dx =
\int_{\Omega\times Y}\left[\nabla v_0 + \nabla_y
v_1(x,y)\right]\cdot\Phi(x,y) \,dy\,dx.
\]
Now, since $B$ and $\Phi$ are in the admissible class of
functions, so is $B^{-1}\Phi$. Therefore, using
Theorem~\ref{thm:strweak}, we have
\[
\lim_{\ve \to 0} \int_\Omega
B^{-1}\left(x,\frac{x}{\varepsilon}\right)\Phieps\cdot\Phieps\,dx
 =  \int_{\Omega\times Y}
B^{-1}(x,y)\Phi(x,y)\cdot\Phi(x,y)\,dy\,dx.
\]
Thus, we conclude
\begin{eqnarray*}
\liminf_{\ve\to 0} I_\ve & \ge &  \int_{\Omega\times Y}\left[\nabla v_0 + \nabla_y
v_1(x,y)\right]\cdot\Phi(x,y) \,dy\,dx\\
& & - \frac{1}{2}\int_{\Omega\times Y}
B^{-1}(x,y)\Phi(x,y)\cdot\Phi(x,y)\,dy\,dx.
\end{eqnarray*}
Taking supremeum over all $\Phi\in\left(\mathcal{D}(\Omega\times
Y)\right)^n$ on the right hand side and using Lemma~\ref{lm:commute}, we obtain,
\begin{eqnarray*}
\liminf_{\ve\to 0} I_\ve & \ge &  \int_{\Omega\times Y}
\sup_{\xi\in\rn}\left\{\left[\nabla v_0 + \nabla_y
v_1(x,y)\right]\cdot\xi - \frac{1}{2}
B^{-1}(x,y)\xi\cdot\xi\right\}\,dy\,dx.
\end{eqnarray*}
Thus, by \eqref{eq:conconj}, we get
\[
\liminf_{\ve\to 0}\int_\Omega \beps\nabla v_\ve\cdot\nabla v_\ve\, dx
 \ge   \int_{\Omega\times Y}B(x,y)\left[\nabla v_0 + \nabla_y
v_1\right]\cdot\left[\nabla v_0 + \nabla_y v_1\right]
\,dy\,dx.
\]
Thus, we have shown \eqref{eq:lsconvx}.
\end{proof}

\section{Dirichlet-type cost functional --- Periodic
Case}\label{sec:bprob}

The purpose of this section is to announce the complete solution of a
problem considered in \cite[\S 3]{tmkakn}. More
precisely, we improve \cite[Theorem
3.7]{tmkakn} in its full generality with no assumptions on the
  control set.  We shall restrict
ourselves to the non-perforated case, for simplicity. However, the
results remain valid in perforated case with necessary
modifications.

The matrices $A(x,y)$ and $B(x,y)$ are periodic in $Y$ and is as given in the
previous section. Also recall that $B$ is symmetric. The corrector
matrix $P$ is as defined in the line following \eqref{eq:celleqn}. Let $U$ be a closed convex subset of $\ltoo$ and $f\in\ltoo$. Given
$\theta\in U$, the cost functional $\jeps$ is defined as,
\begin{equation}\label{eq:bcstfn}
\jeps(\theta)=
\frac{1}{2}\int_\Omega\beps\nabla\ueps\cdot\nabla\ueps\,dx +
\frac{\ve}{2}\|\theta\|^2_2
\end{equation}
where $\ueps\in \hoz$ is the unique solution of 
\begin{equation}\label{eq:gensteqn}
\left\{ \begin{array}{rll}
-\textup{div}(\aepsper\nabla\ueps) &  =  f + \theta & \textup{in }
\Omega\\
\ueps &  =  0  & \textup{on }\bdry.
\end{array}\right.
\end{equation}
Let $\optcon$ be the optimal controls and let $\optstat$ be the
state corresponding to $\optcon$.
Using arguments similar to that of \cite[Lemma 3.2]{tmkakn}, we
observe that $\optcon$ is bounded in $\hmo$ and there exists a
$\theta^*\in\hmo$ such that, for a subsequence, 
\[
\optcon\rightharpoonup\theta^* \text{ weakly in }\hmo
\]
and
\begin{equation}\label{eq:optcgce}
\varepsilon^{1/2} \optcon\rightharpoonup 0 \text{
weakly in }\ltoo.
\end{equation}
Therefore, by Theorem~\ref{thm:wkdt}, $\optstat$ converges weakly
in $\hoz$ to $u^*\in\hoz$ solving,
\[
\left\{ \begin{array}{rll}
-\textup{div}(A_0\nabla u^*) &  =  f+\theta^* & \textup{in
$\Omega$}\\
u^* &  =  0  & \textup{on $\bdry$}.
\end{array}\right.
\]
Let $V$ be the weak closure of $U$ in $\hmo$. By the convexity of
$U$, $V$ is also the strong closure in $\hmo$. We set
\[
F_\ve(\theta)=\left\{
\begin{array}{cl}
\jeps(\theta) & \text{ if } \theta\in U\\
+\infty & \text{ if } \theta\in \hmo\setminus U;
\end{array}
\right.
\]
where $\jeps$ is as given in \eqref{eq:bcstfn}. Let the matrix
$B^\sharp$ be as defined in \eqref{eq:bijentry} and let
$u=u(\theta)\in\hoz$ be the weak solution of
\[
\left\{ \begin{array}{rll}
-\textup{div}(A_0\nabla u) &  =  f+\theta & \textup{in
$\Omega$}\\
u &  =  0  & \textup{on $\bdry$}.
\end{array}\right.
\]
We set
\[
F(\theta)=\left\{
\begin{array}{cl}
\displaystyle{\frac{1}{2} \int_\Omega B^\sharp \nabla u(\theta)\cdot \nabla
u(\theta)\,dx}& \text{ if } \theta\in V\\
+\infty & \text { if } \theta\in \hmo\setminus V.
\end{array}
\right.
\]
The functional $F$ is coercive in $\hmo$ and thus, there is
exists a unique minimizer in $V$. We now show that this minimizer
is none other than $\theta^*$.

\begin{thm}\label{thm:bgamma}
$F_\ve$ $\Gamma$-converges to $F$ in the weak topology of $\hmo$.
Furthermore, $\theta^*$ is the minimizer of $F$ and $F_\ve(\optcon)
\rightarrow F(\theta^*)$.
\end{thm}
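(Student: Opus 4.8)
The plan is to verify the two conditions characterizing the $\Gamma$-limit, namely the liminf inequality (i) and the recovery-sequence inequality (ii) from the characterization of sequential $\Gamma$-convergence, working throughout in the weak topology of \hmo. The deduction that $\theta^*$ minimizes $F$ and that the minima converge will then follow immediately from Theorem~\ref{thm:gamma}, once we know the minimizers $\optcon$ of $F_\ve$ converge weakly in \hmo{} to $\theta^*$, which is already recorded in the discussion preceding the statement.

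For the liminf inequality, I would take an arbitrary sequence $\theps \rightharpoonup \theta$ weakly in \hmo{} and show $\liminf_\ve F_\ve(\theps) \ge F(\theta)$. The only nontrivial case is when $\liminf_\ve F_\ve(\theps)$ is finite, which forces (along the relevant subsequence) $\theps \in U$ and $\ve^{1/2}\|\theps\|_2$ bounded; the latter means $\ve^{1/2}\theps$ is bounded in \ltoo, so $\theps$ satisfies the data hypothesis of Theorem~\ref{thm:wkdt} with $g_\ve = f + \theps$ and $\gamma = 1/2$. Hence the associated states $u_\ve(\theps)$ converge weakly in \hoz{} to $u(\theta)$, the limit also lies in $V$ (the weak closure of $U$), and Proposition~\ref{prop:gllt} gives
\[
\liminf_{\ve\to 0}\frac{1}{2}\int_\Omega \beps \nabla u_\ve\cdot\nabla u_\ve\,dx \ge \frac{1}{2}\int_\Omega B^\sharp \nabla u(\theta)\cdot\nabla u(\theta)\,dx = F(\theta).
\]
Since the penalty term $\frac{\ve}{2}\|\theps\|_2^2 \ge 0$, it only helps the inequality, and we discard it in the limit.

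For the recovery sequence, given $\theta \in V$ (the case $\theta \notin V$ being vacuous since $F = +\infty$ there), I want a sequence $\theps \rightharpoonup \theta$ with $\limsup_\ve F_\ve(\theps) \le F(\theta)$. The natural first attempt is the constant sequence $\theps \equiv \theta$ whenever $\theta \in U$; then Theorem~\ref{thm:genenercon} applies with $g_\ve \equiv f+\theta$ converging strongly, yielding $\int_\Omega \beps \nabla u_\ve\cdot\nabla u_\ve\,dx \to \int_\Omega B^\sharp \nabla u(\theta)\cdot\nabla u(\theta)\,dx$, while $\frac{\ve}{2}\|\theta\|_2^2 \to 0$, giving equality in the limit. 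The main obstacle is the general case $\theta \in V\setminus U$: here $\theta$ is only a weak \hmo-limit of elements of $U$, so one must choose a genuinely $\ve$-dependent approximating sequence $\theps \in U$ with $\theps \rightharpoonup \theta$ and simultaneously control both the penalty $\frac{\ve}{2}\|\theps\|_2^2$ and the energy limit. I would exploit the strong \hmo-density of $U$ in $V$ (stated in the excerpt) to pick $\eta_\delta \in U$ with $\eta_\delta \to \theta$ strongly in \hmo{} as $\delta \to 0$; for each fixed $\delta$ the constant-sequence argument gives $F_\ve(\eta_\delta) \to F(\eta_\delta)$, and by continuity of $\theta \mapsto F(\theta)$ on $V$ (which follows from the continuous dependence of $u(\theta)$ on $\theta$ in the relevant norms and the boundedness of $B^\sharp$) we have $F(\eta_\delta) \to F(\theta)$. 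A diagonal extraction in $(\ve,\delta)$ then produces the desired recovery sequence; the care needed is to ensure each $\eta_\delta \in U$ so that $F_\ve$ takes its finite value, and that the diagonalization respects the weak convergence $\theps \rightharpoonup \theta$. This interplay between the weak \hmo{} topology, the strong density, and the $\ve$-penalty is the delicate point of the argument.
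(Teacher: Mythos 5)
Your proposal is correct and follows essentially the same route as the paper: the liminf inequality via Theorem~\ref{thm:wkdt} (with $\gamma=1/2$ applied to $g_\ve=f+\theps$) combined with Proposition~\ref{prop:gllt}, and the recovery sequence via the strong $\hmo$-density of $U$ in $V$ together with Theorem~\ref{thm:genenercon}. Your explicit diagonal argument in $(\ve,\delta)$ to control the penalty term $\frac{\ve}{2}\|\theps\|_2^2$, and your derivation of the boundedness of $\ve^{1/2}\theps$ in $\ltoo$ from the finiteness of the liminf, make rigorous two steps that the paper's proof asserts without detail.
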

\begin{proof}
Let $\theps$ be a sequence weakly converging to $\theta$
in $\hmo$. Observe that it is enough to consider the case when
$\theta\in V$ and $\theps\in U$, for infinitely many $\ve$, as the
other cases correspond to the trivial situation ($\liminf F_\ve(\theps)$ is
infinite). If $\theta\in V$ and $\theps \in U$, we have,
\begin{eqnarray*}
\liminf_{\ve\to 0}F_\ve(\theps) & = & \liminf_{\ve\to
0}\left[\frac{1}{2} \int_\Omega \beps\nabla \ueps\cdot\nabla
  \ueps\, dx + \frac{\ve}{2}\|\theps\|^2_2\right]\\
& \ge & \liminf_{\ve\to 0}\frac{1}{2} \int_\Omega \beps\nabla
\ueps\cdot\nabla \ueps\, dx.
\end{eqnarray*}
Observe that $\{\ve^{\frac{1}{2}}\theps\}$ is bounded in $\ltoo$ and
converges weakly to $0$. Using the inequality \eqref{eq:gllt} as proved in Proposition~\ref{prop:gllt}, we have 
\[
\liminf_{\ve\to 0}F_\ve(\theps) \ge \frac{1}{2} \int_\Omega
B^\sharp \nabla u(\theta)\cdot \nabla u(\theta)\,dx= F(\theta).
\]
It now remains to prove the $\limsup$-inequality. It is enough to
consider the case $\theta\in V$ (the finite situation). By the
density of $U$ in $V$ and convexity of $U$, there exists a
sequence $\theps$ \emph{strongly} converging to $\theta$ in
$\hmo$. Therefore, by Theorem~\ref{thm:genenercon}, we have,
\[
\limsup_{\ve\to 0}F_\ve(\theps) = F(\theta).
\]
Thus, we have shown the $\Gamma$-convergence of $F_\ve$ to $F$.
Therefore, by Theorem~\ref{thm:gamma}, $\theta^*$ is the minimizer of $F$ and
$F_\ve(\optcon) \rightarrow F(\theta^*)$.
\end{proof}

It follows from Proposition~\ref{prop:gllt} that for the weakly
converging optimal controls,  $\optcon \rightharpoonup \theta^*$
in $\hmo$, \eqref{eq:gllt} holds for the corresponding optimal
states $\optstat$. However, in the following proposition we
obtain the equality in \eqref{eq:gllt}.

\begin{lemma}\label{lm:equality}
For the optimal states $\optstat$ and $u^*$, we have
\[
\lim_{\ve\to 0}\int_\Omega \beps \nabla \optstat\cdot\nabla
\optstat\, dx = \int_\Omega B^\sharp\nabla u^* \cdot\nabla u^*\,dx
\]
\end{lemma}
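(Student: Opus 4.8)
The plan is to combine the lower bound already furnished by Proposition~\ref{prop:gllt} with the convergence of minima coming from the $\Gamma$-convergence result of Theorem~\ref{thm:bgamma}, exploiting the non-negativity of the penalization term $\frac{\ve}{2}\|\optcon\|_2^2$. Since Proposition~\ref{prop:gllt} already supplies the $\liminf$ half of the desired equality, it suffices to produce the matching $\limsup$ inequality and then read off that both coincide.

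First I would check that the optimal data $g_\ve = f + \optcon$ fall under the hypotheses of Theorem~\ref{thm:wkdt}, so that Proposition~\ref{prop:gllt} applies to the optimal states. Indeed $g_\ve \rightharpoonup f + \theta^*$ weakly in \hmo, while $\ve^{1/2}g_\ve = \ve^{1/2}f + \ve^{1/2}\optcon$ is bounded in \ltoo{} because $\ve^{1/2}f \to 0$ strongly and $\ve^{1/2}\optcon \rightharpoonup 0$ weakly in \ltoo{} by \eqref{eq:optcgce}. Thus, with $\gamma = 1/2$, Proposition~\ref{prop:gllt} yields
\[
\liminf_{\ve\to 0}\int_\Omega \beps\nabla \optstat\cdot\nabla \optstat\, dx \ge \int_\Omega B^\sharp\nabla u^*\cdot\nabla u^*\,dx.
\]

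For the reverse inequality I would invoke Theorem~\ref{thm:bgamma}, whose conclusion is precisely the convergence of minima $F_\ve(\optcon) \to F(\theta^*)$. Unwinding the definitions via \eqref{eq:bcstfn} and the definition of $F$, this reads
\[
\frac{1}{2}\int_\Omega \beps\nabla \optstat\cdot\nabla \optstat\,dx + \frac{\ve}{2}\|\optcon\|_2^2 \longrightarrow \frac{1}{2}\int_\Omega B^\sharp\nabla u^*\cdot\nabla u^*\,dx.
\]
Because the penalty term $\frac{\ve}{2}\|\optcon\|_2^2$ is non-negative, the energy alone is dominated by the convergent sum, so discarding the penalty gives
\[
\limsup_{\ve\to 0}\int_\Omega \beps\nabla \optstat\cdot\nabla \optstat\, dx \le \int_\Omega B^\sharp\nabla u^*\cdot\nabla u^*\,dx.
\]
Combining the two displayed inequalities forces the $\liminf$ and $\limsup$ to agree, whence the limit exists and equals the claimed value.

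I do not expect any genuine analytic obstacle here; the point requiring the most care is the bookkeeping, namely recognizing that the strict improvement over the mere inequality of Proposition~\ref{prop:gllt} comes entirely from the convergence of minima in $\Gamma$-convergence, and that the penalty term, though it vanishes in the limit, must be kept \emph{non-negative} precisely so that it can be dropped to extract the $\limsup$ bound. A minor point worth stating explicitly is that all these convergences hold along the subsequence on which $\optcon \rightharpoonup \theta^*$; since $\theta^*$ is the unique minimizer of $F$, the conclusion in fact propagates to the full sequence.
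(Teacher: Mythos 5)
Your proof is correct and follows essentially the same route as the paper: the $\liminf$ bound comes from Proposition~\ref{prop:gllt} applied to the optimal states, and the matching $\limsup$ bound comes from dropping the non-negative penalty term in $\jeps(\optcon)$ and using the convergence of minima from Theorem~\ref{thm:bgamma}. Your explicit verification that $g_\ve = f+\optcon$ satisfies the hypotheses of Theorem~\ref{thm:wkdt} (with $\gamma=1/2$) is a detail the paper only states implicitly, but it is the same argument.
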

\begin{proof}
A consequence of Theorem~\ref{thm:bgamma} is that $\theta^*$ is a minimizer of $F$ and
\[
\jeps(\optcon) \rightarrow \frac{1}{2}\int_\Omega B^\sharp\nabla
u^* \cdot\nabla u^*\,dx.
\]
Observe that,
\[
\frac{1}{2}\int_\Omega \beps \nabla \optstat\cdot\nabla \optstat\,
dx \le \jeps(\optcon).
\]
Now, taking $\limsup$ both sides we have,
\[
\limsup_{\ve\to 0}\int_\Omega \beps \nabla \optstat\cdot\nabla
\optstat\, dx \le \int_\Omega B^\sharp\nabla u^* \cdot\nabla u^*\,dx
\]
and comparing with \eqref{eq:gllt}, gives the equality in
\eqref{eq:gllt} for optimal states.
\end{proof}
\begin{rmrk}
We now observe that one can, in fact, improve the convergence in
\eqref{eq:optcgce} to strong convergence. Note that, as a consequence of
Theorem~\ref{thm:bgamma} and Lemma~\ref{lm:equality},
\[
\lim_{\ve\to 0}\frac{\ve}{2}\|\optcon\|^2_2 = \lim_{\ve\to
0}\left(F_\ve(\optcon)- \frac{1}{2} \int_\Omega \beps\nabla
  \optstat\cdot \nabla \optstat\, dx \right) = 0.
\]
\end{rmrk}\qed

In general, there is no corrector result available for weakly
converging data. However, we now prove a corrector result for the
optimal states.
\begin{thm}\label{thm:corrector}
Let the corrector matrix $P(x,y)$ be as defined in
\eqref{eq:celleqn}. We also assume that both $A$ and $B$ are in
$C[\Omega;L^\infty_{\textup{per}}(Y)]^{n\times n}$. Then,
\[
\nabla\optstat - \left[\peps+I\right]\nabla u^* \rightarrow 0 \text{ strongly in
} \ltoo.
\]
\end{thm}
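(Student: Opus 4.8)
The plan is to run the classical energy--based corrector argument, taking the weighted energy convergence of Lemma~\ref{lm:equality} as the essential input and the strong--weak two-scale pairing of Theorem~\ref{thm:strweak} as the main limit-passage tool. Since $B$ is uniformly elliptic, $c\|w\|_2^2\le\int_\Omega\beps w\cdot w\,dx$ for every $w\in(\ltoo)^n$; hence it suffices to show that the weighted corrector energy
\[
D_\ve := \int_\Omega \beps\left[\nabla\optstat-(\peps+I)\nabla u^*\right]\cdot\left[\nabla\optstat-(\peps+I)\nabla u^*\right]\,dx
\]
tends to $0$. Expanding $D_\ve$ gives the quadratic term in $\nabla\optstat$, which converges to $\int_\Omega B^\sharp\nabla u^*\cdot\nabla u^*\,dx$ by Lemma~\ref{lm:equality}, a cross term, and a term quadratic in $(\peps+I)\nabla u^*$. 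The difficulty is that the latter two terms contain the product of $\peps$ with the merely $L^2$ field $\nabla u^*$, and $\peps$ is only bounded in $(\ltoo)^{n\times n}$ (the cell problem \eqref{eq:celleqn} gives $P(x,\cdot)\in L^2_{\textup{per}}(Y)$, not $L^\infty$), so these cannot be passed to the limit directly.

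To circumvent this I would first replace $\nabla u^*$ by a smooth field $\Phi\in\mathcal{D}(\Omega)^n$ approximating it and work with the nonnegative quantity
\[
E_\ve^\Phi := \int_\Omega \beps\left[\nabla\optstat-(\peps+I)\Phi\right]\cdot\left[\nabla\optstat-(\peps+I)\Phi\right]\,dx.
\]
The hypothesis $A,B\in C[\Omega;L^\infty_{\textup{per}}(Y)]^{n\times n}$ enters here: by continuous dependence of \eqref{eq:celleqn} on $A$ one gets $P\in C[\Omega;L^2_{\textup{per}}(Y)]^{n\times n}$, hence $B(P+I)\in C[\Omega;L^2_{\textup{per}}(Y)]$, which is admissible, so $\beps(\peps+I)\Phi$ strongly two-scale converges to $B(x,y)(P(x,y)+I)\Phi(x)$, while $\nabla\optstat$ weakly two-scale converges to $(P+I)\nabla u^*$. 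Applying Lemma~\ref{lm:equality} to the first term, Theorem~\ref{thm:strweak} to the cross term, and the strong--strong pairing to the last term, and using the symmetry of $B$ together with the definition \eqref{eq:bijentry} of $B^\sharp$, the three limits collapse to
\[
\lim_{\ve\to 0}E_\ve^\Phi = \int_{\Omega\times Y}B(x,y)(P(x,y)+I)(\nabla u^*-\Phi)\cdot(P(x,y)+I)(\nabla u^*-\Phi)\,dy\,dx = \int_\Omega B^\sharp(\nabla u^*-\Phi)\cdot(\nabla u^*-\Phi)\,dx.
\]
Since $B^\sharp\in(\linfty)^{n\times n}$, this is bounded by $C\|\nabla u^*-\Phi\|_2^2$. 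Combining with the ellipticity lower bound $E_\ve^\Phi\ge c\|\nabla\optstat-(\peps+I)\Phi\|_2^2$ yields
\[
\limsup_{\ve\to 0}\|\nabla\optstat-(\peps+I)\Phi\|_2\le C'\|\nabla u^*-\Phi\|_2
\]
for every smooth $\Phi$.

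It then remains to send $\Phi\to\nabla u^*$. Writing
\[
\nabla\optstat-(\peps+I)\nabla u^* = \left[\nabla\optstat-(\peps+I)\Phi\right]+(\peps+I)(\Phi-\nabla u^*),
\]
the first bracket is governed by the previous estimate. I expect the second term, $\|(\peps+I)(\Phi-\nabla u^*)\|_2$, to be the main obstacle: because $\peps$ is only $L^2$-bounded and can concentrate, the product of $\peps$ with the $L^2$-small but non-smooth field $\Phi-\nabla u^*$ is a priori controlled only in $L^1$, not in $L^2$, so approximating $\nabla u^*$ in $L^2$ alone does not close the argument. (Equivalently, the underlying subtlety is that $(P+I)\nabla u^*$ need not be an admissible function when $\nabla u^*$ is merely in $L^2$, which is exactly what one would need to identify strong two-scale convergence of $\nabla\optstat$ with the corrector statement.)

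The natural way to overcome this is a higher-integrability (Meyers-type) input. One shows that for some $\delta>0$ the correctors $\peps$ are bounded in $(L^{2+\delta}(\Omega))^{n\times n}$ uniformly in $\ve$ and that $\nabla u^*\in L^{2+\delta}(\Omega)$; then, choosing $\Phi\to\nabla u^*$ in $L^{2+\delta}$ and applying H\"older's inequality, $\|(\peps+I)(\Phi-\nabla u^*)\|_2\le\|\peps+I\|_{2+\delta}\,\|\Phi-\nabla u^*\|_q$ with $q>2$, which tends to $0$. With this control, a diagonal choice $\Phi=\Phi_m\to\nabla u^*$ forces $\limsup_{\ve\to 0}\|\nabla\optstat-(\peps+I)\nabla u^*\|_2=0$, completing the proof. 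If such higher integrability is not available for the given data (recall that $\theta^*$ is only known in $\hmo$), this last step is precisely where an extra regularity assumption on the data, or on $u^*$, would have to be invoked.
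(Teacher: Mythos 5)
Your overall strategy coincides with the paper's: expand the $B$-weighted energy of the difference, send the term quadratic in $\nabla\optstat$ to $\int_\Omega B^\sharp\nabla u^*\cdot\nabla u^*\,dx$ via Lemma~\ref{lm:equality}, identify the cross term and the pure corrector term by two-scale arguments, and conclude by the coercivity of $B$. The divergence is in how the terms containing $\peps\nabla u^*$ are handled, and there your proposal does not close. You insert a smooth approximation $\Phi$ of $\nabla u^*$ and then correctly observe that the remainder $\|(\peps+I)(\Phi-\nabla u^*)\|_2$ cannot be controlled by $\|\Phi-\nabla u^*\|_2$ alone, since $\peps$ is only $L^2$-bounded; your proposed remedy is a Meyers-type bound $\peps\in L^{2+\delta}$ together with $\nabla u^*\in L^{2+\delta}$. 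The first half is available (Meyers for the cell problem \eqref{eq:celleqn}), but the second is not justified under the stated hypotheses: $u^*$ solves the homogenized equation with right-hand side $f+\theta^*$ and $\theta^*$ is only known to lie in $\hmo$, so, as you yourself concede, the argument terminates in a conditional statement. A proof that ends with ``this is where an extra assumption would have to be invoked'' has not established the theorem.

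The paper's proof avoids the approximation by $\Phi$ altogether. It expands $I_\ve=I^1_\ve-2I^2_\ve+I^3_\ve$ exactly as you do, but then uses the hypothesis $A\in C[\Omega;L^\infty_{\textup{per}}(Y)]^{n\times n}$ to obtain $P\in C[\Omega;L^2_{\textup{per}}(Y)]^{n\times n}$ by continuous dependence for the cell problem, so that $BP$ (hence $\beps(\peps+I)$) is an admissible function and strongly two-scale converges; it then pairs this directly with the weak two-scale limit $\nabla\optstat\stackrel{2\text{s}}{\rightharpoonup}(P+I)\nabla u^*$ via Theorem~\ref{thm:strweak} to evaluate $\lim I^2_\ve$, and treats $I^3_\ve$ ``similarly'', getting $\int_\Omega B^\sharp\nabla u^*\cdot\nabla u^*\,dx$ for both, whence $I_\ve\to 0$. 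Your observation that the factor $\nabla u^*$ is merely in $L^2$ while Theorem~\ref{thm:strweak} is stated for smooth $\tau$ is a fair point about the level of detail in that limit passage, and the difficulty you isolate is real; but the intended mechanism in the paper is the admissibility of $B(P+I)$, not a density-in-$L^2$ argument. To produce a complete proof you should either carry out that admissibility route (justifying the pairing against $\nabla u^*$) or actually supply the higher-integrability input, rather than leave the conclusion contingent on it.
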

\begin{proof}
Let
\[
I_\ve = \int_\Omega \beps\left[\nabla\optstat -
\left[\peps+I\right]\nabla
u^*\right]\cdot\left[\nabla\optstat - \left[\peps+I\right]\nabla
u^*\right]\, dx.
\]
Then (also using the symmetry of $B$),
\begin{eqnarray*}
I_\ve & = & \int_\Omega \beps \nabla \optstat\cdot\nabla
\optstat\, dx - 2 \int_\Omega \beps\left[\peps+I\right] \nabla u^*\cdot\nabla \optstat\, dx\\
& & + \int_\Omega \beps\left[\peps+I\right]\nabla
u^*\cdot\left[\peps+I\right]\nabla u^*\, dx.
\end{eqnarray*}
To simplify notation, we rewrite above equation as,
\[
I_\ve  = I^1_\ve -2 I^2_\ve + I^3_\ve.
\]
Using Lemma~\ref{lm:equality}, we see that
\[
\lim_{\ve\to 0}I^1_\ve = \int_\Omega B^\sharp\nabla u^* \cdot\nabla u^*\,dx.
\]
Since $A\in C[\Omega;L^\infty_{\text{per}}(Y)]^{n\times n}$ by
the continuous dependence on data for elliptic equation, we have
$P\in C[\Omega;L^2_{\text{per}}(Y)]^{n\times n}$ and hence
$BP$ in the class of admissible functions and thus, strongly
two-scale converges. By Theorem~\ref{thm:strweak}
\begin{eqnarray*}
\lim_{\ve\to 0}I^2_\ve & = & \int_{\Omega\times Y}B(x,y)(P(x,y)+I)\nabla
u^*\cdot (P(x,y)+I)\nabla u^* \,dy\,dx\\
& = & \int_\Omega B^\sharp\nabla u^*\cdot\nabla u^*\,dx 
\end{eqnarray*}
Similarly, we compute the last term,
\begin{eqnarray*}
\lim_{\ve\to 0}I^3_\ve & = & \int_{\Omega\times Y}B(x,y)(P(x,y)+I)\nabla
u^*\cdot (P(x,y)+I)\nabla u^* \,dy\,dx\\
& = & \int_\Omega B^\sharp\nabla u^*\cdot\nabla u^*\,dx
\end{eqnarray*}
Therefore, by the coercivity of $B$,
\[
c \left\|\nabla\optstat - \left[\peps+I\right]\nabla
u^*\right\|^2_2 \le I_\ve.
\]
Now taking $\limsup$ both sides, we have our desired result.
\end{proof}

Recall from Remark~\ref{rm:econ} that the energy convergence
\eqref{eq:energy} is not always true for weakly converging data.
Besides Lemma~\ref{lm:equality}, we have the following corollary.
\begin{cor}
For the optimal states $\optstat$ and $u^*$, we have
\[
\lim_{\ve\to 0}\int_\Omega \aeps \nabla \optstat\cdot\nabla
\optstat\, dx = \int_\Omega A_0\nabla u^* \cdot\nabla u^*\,dx
\]
\end{cor}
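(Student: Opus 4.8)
The plan is to read this off from the corrector result of Theorem~\ref{thm:corrector}, which is the immediately preceding statement, together with the two-scale computation already performed in its proof. Note first that we cannot simply invoke Lemma~\ref{lm:equality} with $B$ replaced by $A$, since altering the matrix in the cost would change the optimal control $\optcon$ and hence the state $\optstat$; here $\optstat$ is fixed and we are merely evaluating a different quadratic energy on it. Instead I would start from Theorem~\ref{thm:corrector} and write
\[
\nabla\optstat = \left[\peps+I\right]\nabla u^* + r_\ve, \qquad r_\ve \to 0 \text{ strongly in } (\ltoo)^n .
\]
Since $\optstat$ is bounded in $\hoz$, the sequence $\nabla\optstat$ is bounded in $(\ltoo)^n$, and therefore so is $\left[\peps+I\right]\nabla u^* = \nabla\optstat - r_\ve$. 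This a priori $(\ltoo)^n$ bound on the corrector combination is exactly what the continuity hypotheses of Theorem~\ref{thm:corrector} provide, and it is the key technical input below.

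Next I would expand the energy along this decomposition:
\[
\int_\Omega \aeps\nabla\optstat\cdot\nabla\optstat\,dx = \int_\Omega \aeps\left[\peps+I\right]\nabla u^*\cdot\left[\peps+I\right]\nabla u^*\,dx + R_\ve,
\]
where $R_\ve$ gathers the terms containing $r_\ve$. Because $\aeps$ is bounded in $(\linfty)^{n\times n}$ by $\beta$, while $\nabla\optstat$ and $\left[\peps+I\right]\nabla u^*$ are both bounded in $(\ltoo)^n$ and $r_\ve\to 0$ in $(\ltoo)^n$, the Cauchy--Schwarz inequality forces $R_\ve\to 0$; no symmetry of $\aeps$ is needed for this step.

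It then remains to pass to the limit in the leading term, and this is precisely the computation carried out for the quantity $I^3_\ve$ in the proof of Theorem~\ref{thm:corrector}, with $B$ replaced by $A$. Under the standing assumption $A\in C[\Omega;L^\infty_{\textup{per}}(Y)]^{n\times n}$, the cell corrector obeys $P\in C[\Omega;L^2_{\textup{per}}(Y)]^{n\times n}$, so $A(P+I)$ is admissible and strongly two-scale converges; pairing it with the weak two-scale limit $\left[\peps+I\right]\nabla u^*\stackrel{2\text{s}}{\rightharpoonup}(P+I)\nabla u^*$ through Theorem~\ref{thm:strweak} gives
\[
\lim_{\ve\to 0}\int_\Omega \aeps\left[\peps+I\right]\nabla u^*\cdot\left[\peps+I\right]\nabla u^*\,dx = \int_{\Omega\times Y} A(x,y)(P(x,y)+I)\nabla u^*\cdot (P(x,y)+I)\nabla u^*\,dy\,dx .
\]
Finally I would identify the right-hand side: performing the $y$-integration and reading off the entries via the defining formula \eqref{eq:ijentry} shows that $\int_Y A(P+I)\cdot(P+I)\,dy$ is exactly $A_0$ (equivalently, this is the case $B=A$ of \eqref{eq:bijentry}, for which $B^\sharp=A_0$ as recorded after Theorem~\ref{thm:genenercon}). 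Hence the limit equals $\int_\Omega A_0\nabla u^*\cdot\nabla u^*\,dx$, as asserted. The only delicate point is the middle paragraph: one must be sure the remainder terms genuinely vanish, which relies on the $(\ltoo)^n$ boundedness of $\left[\peps+I\right]\nabla u^*$ delivered by the corrector convergence, and on reusing verbatim the admissibility argument for $A(P+I)$ that was applied to $B(P+I)$ in Theorem~\ref{thm:corrector}.
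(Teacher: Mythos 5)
Your proof is correct and follows essentially the same route as the paper: both substitute $\left[\peps+I\right]\nabla u^*$ for $\nabla\optstat$ via Theorem~\ref{thm:corrector} and then pass to the limit in the quadratic form $\int_\Omega \aeps\left[\peps+I\right]\nabla u^*\cdot\left[\peps+I\right]\nabla u^*\,dx$. The only difference is in the last step, where the paper invokes the abstract corrector property ${^tP}_\ve A_\ve P_\ve\rightharpoonup A_0$ while you rerun the explicit two-scale computation of $I^3_\ve$ with $A$ in place of $B$ and identify the limit through \eqref{eq:ijentry}; this is a purely cosmetic (if anything slightly more careful) variant, since that corrector property is stated only in $\left[\mathcal{D}^\prime(\Omega)\right]^{n\times n}$.
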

\begin{proof}
The sequence of corrector matrices $P_\ve$ introduced in
\S\ref{sec:hcon} can be taken to be $P_\ve = \peps +I$. Therefore,
by Theorem~\ref{thm:corrector}, we have
\begin{eqnarray*}
\int_\Omega \aeps \nabla \optstat\cdot\nabla
\optstat\, dx & = & \int_\Omega \aeps P_\ve \nabla u^*\cdot P_\ve
\nabla u^*\, dx + o(1)\\
& = & \int_\Omega P_\ve^t \aeps P_\ve \nabla u^*\cdot \nabla u^*\, dx+
o(1)\\
& = & \int_\Omega A_0\nabla u^* \cdot\nabla u^*\,dx + o(1)
\end{eqnarray*}
where the last equality is due to the properties of corrector
matrices.
\end{proof}

\begin{rmrk}
More generally, given $A_\ve$ $H$-converges to $A_0$,
$g_\varepsilon\in\hmo$, let us assume that $v_\varepsilon$, the solution
 of \eqref{eq:gvardata}, satisfies the convergences in
\eqref{eq:staweakcon} and \eqref{eq:fluxcon}. Then, the energy
convergence  
\[
\int_\Omega A_\ve\nabla\veps.\nabla\veps\,dx
\stackrel{\ve\to 0}{\longrightarrow}
\int_\Omega A_0\nabla v_0.\nabla v_0\,dx
\]
is, in fact, equivalent to the convergence
\[
\nabla v_\ve - P_\ve \nabla v_0 \rightarrow 0 \text{ strongly in } \ltoo
\]
under certain hypotheses, for example, $A_\ve$ symmetric.
\end{rmrk}

\section{Measure data}\label{sec:measdt}

A \emph{Borel measure} on $\Omega$ is a countably additive set
function defined on the Borel subsets of $\Omega$ with values in
$[-\infty,+\infty]$. The total variation of a measure
$\lambda$ is denoted by $|\lambda|$. Let $\borelmeas$ denote the
set of all Borel measures $\lambda$ with finite variation, i.e., $|\lambda|(\Omega) < +\infty$. We say a subset $\mathcal{E}$ in
$\borelmeas$ is bounded if we have $\sup_{\lambda\in
\mathcal{E}}|\lambda|(\Omega) < +\infty$.

Given $A\in\mabomega$, we make precise the notion of solution when
$\lambda\in\borelmeas$ for the second order linear elliptic equation
\begin{equation}\label{eq:meastate}
\left\{ \begin{array}{rll}
-\textup{div}(A(x)\nabla u) &  =  \lambda & \textup{ in }\Omega\\
u &  =  0  & \textup{ on }\partial\Omega.\\
\end{array}\right.
\end{equation}
Assume $p>n$, by the classical Sobolev Imbedding
$W^{1,p}_0(\Omega)\subset C_0(\Omega)$, we have
$\borelmeas\subset W^{-1,q}(\Omega)$, where $q=\frac{p}{p-1}$ is the conjugate exponent of $p$. In this case, $u$ is defined to be the
usual variational solution for elliptic equations. For instance,
when $p=2$ and $n<2$, $\lambda\in \hmo$ and we have the solution $u\in\hoz$ given as
\[
\int_\Omega A(x)\nabla u\cdot \nabla w \,dx = \left\langle
\lambda, w\right\rangle,  \quad \forall w\in\hoz.
\]
However, for $n>2$, $\lambda$ is not necessarily in $\hmo$ and
the solution to \eqref{eq:meastate} cannot be considered in the above
sense. For
this situation G.~Stampacchia introduced a notion of solution in
\cite[Definition 9.1]{stampacchia} using duality. We shall set $n\ge3$ (to avoid the trivial case)
for this section and the next section. Let $1^\star=\frac{n}{n-1}$
denote the Sobolev conjugate of the exponent $1$.
\begin{defn}\label{def:stmpsoln}
Given $\lambda\in\borelmeas$, we say $u\in L^1(\Omega)$ is a \emph{Stampacchia solution} of
\begin{equation}\label{eq:stmpsoln}
\left\{ \begin{array}{rll}
-\textup{div}(A(x)\nabla u) &  =  \lambda & \textup{ in }\Omega\\
u &  =  0  & \textup{ on }\partial\Omega\\
\end{array}\right.
\end{equation}
whenever
\begin{equation}\label{eq:nsc}
\int_\Omega ug \,dx = \int_\Omega v\,d\lambda, \quad \forall
g\in\linfty,
\end{equation}
and $v$ solves
\begin{equation}\label{eq:adjeqn}
\left\{ \begin{array}{rll}
-\textup{div}({A^t}(x)\nabla v) &  =  g & \textup{ in }\Omega\\
u &  =  0  & \textup{ on }\partial\Omega.\\
\end{array}\right.
\end{equation}\qed
\end{defn}
The existence of $v$ is well known, by Lax-Milgram theorem, and
 \[
\|v\|_{\hoz} \le C_0 \|g\|_2
\]
where the constant $C_0$ depends on $n,\alpha$ and $\Omega$.
Henceforth $C_0$ will denote a generic constant depending on
$n,\alpha$ and $\Omega$. Assuming sufficient regularity of
$\partial\Omega$, by a classical regularity result (cf.
\cite{giltru}), given $s>n$ there exists $\nu$ and a constant
$C_0$ such that $v$ satisfies the H\"{o}lder estimate,
\begin{equation}\label{eq:holder}
\|v\|_{C^{0,\nu}(\Omega)} \le C_0 \|g\|_s.
\end{equation}
The existence, uniqueness and regularity of Stampacchia solution $u$ was shown in
\cite[Theorem 9.1]{stampacchia}. One has the following regularity
for $u$,
\begin{equation}\label{eq:qesti}
\|u\|_{\wlq} \le C_0|\lambda|, \quad \forall \allq
\end{equation}
Observe that the variational solution corresponding to a data
$h\in\ltoo$ is a Stampacchia solution corresponding to the
measure $\lambda = h\,dx$, induced by $h$.

We now show the asymptotic behaviour of the Stampacchia
solution under weak-* convergence in $\borelmeas$ of the data. The asymptotic
behaviour of Stampacchia solution was observed in
\cite{malusaorsina} for a fixed $\lambda\in\borelmeas$. The
argument, however, quite simply extends to measures converging weak-* in
$\borelmeas$, which we give below for completeness sake. Let $\ve>0$ be a
given parameter which tends to zero and let $A_\ve\in\mabomega$ be
a family of matrices.

\begin{thm}[Asymptotic Behaviour]\label{thm:asybeh}
Let $\aeps$ $G$-converge to $A_0$
and let $\lambdaeps$ weak-* converge to $\lambda$ in $\borelmeas$. If
$\ueps\in L^1(\Omega)$ is the Stampacchia solution of
\begin{equation}\label{eq:vareqn}
\left\{ \begin{array}{rll}
-\textup{div}(\aeps\nabla \ueps) &  =  \lambdaeps & \textup{ in }\Omega\\
\ueps &  =  0  & \textup{ on }\partial\Omega,\\
\end{array}\right.
\end{equation}
then $\ueps \rightharpoonup u_0$ weakly in $\wlq$,  for all
$\allq$ where $u_0$ is
the Stampacchia solution of
\begin{equation}\label{eq:homeqn}
\left\{ \begin{array}{rll}
-\textup{div}(A_0(x)\nabla u_0) &  =  \lambda & \textup{ in }\Omega\\
u_0 &  =  0  & \textup{ on }\partial\Omega.\\
\end{array}\right.
\end{equation}
\end{thm}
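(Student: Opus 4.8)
The plan is to argue entirely from the duality characterisation of the Stampacchia solution in Definition~\ref{def:stmpsoln}, passing to the limit in the defining identity \eqref{eq:nsc}. First I would extract a candidate limit: since $\lambdaeps$ converges weak-* in $\borelmeas$, the uniform boundedness principle yields $\sup_\ve|\lambdaeps|(\Omega)<+\infty$, and inserting this into the a priori estimate \eqref{eq:qesti} shows that $\{\ueps\}$ is bounded in $\wlq$ for every \allq. For $q\in(1,1^\star)$ this space is reflexive, so along a subsequence $\ueps\rightharpoonup u_*$ weakly in $\wlq$, with the same limit for every such $q$ (the limits agree a.e.\ through the compact embedding into $L^1(\Omega)$); it then remains to identify $u_*$ with the Stampacchia solution $u_0$ of \eqref{eq:homeqn} and to verify that the limit is independent of the subsequence.

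The heart of the argument is the analysis of the adjoint sequence. Fix $g\in\linfty$ and let $\veps\in\hoz$ solve the adjoint problem \eqref{eq:adjeqn} with matrix $A_\ve^t$ and datum $g$. Two independent compactness mechanisms apply. Since $g\in\linfty\subset\hmo$ and $A_\ve^t$ $G$-converges to $A_0^t$, the solutions converge weakly in $\hoz$ to the solution $v_0$ of the limit adjoint problem governed by $A_0^t$. Independently, $g\in\linfty\subset L^s(\Omega)$ for some $s>n$, so the H\"older estimate \eqref{eq:holder}---whose constant and exponent depend only on $n,\alpha,\Omega$ and are therefore uniform over $\mabomega$---bounds $\{\veps\}$ in $C^{0,\nu}(\Omega)$ independently of $\ve$. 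By Arzel\`a--Ascoli the family is precompact in $C_0(\Omega)$, and any subsequential uniform limit must agree a.e.\ with the weak $\hoz$-limit $v_0$; hence $\veps\to v_0$ strongly in $C_0(\Omega)$ for the whole sequence.

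With both convergences established I pass to the limit in \eqref{eq:nsc}. On the left-hand side, $\ueps\rightharpoonup u_*$ weakly in $L^q(\Omega)$ tested against $g\in\linfty\subset L^{q'}(\Omega)$ gives $\int_\Omega\ueps g\,dx\to\int_\Omega u_* g\,dx$. On the right-hand side I write
\[
\int_\Omega\veps\,d\lambdaeps-\int_\Omega v_0\,d\lambda=\int_\Omega(\veps-v_0)\,d\lambdaeps+\int_\Omega v_0\,d(\lambdaeps-\lambda),
\]
where the first term is dominated by $\|\veps-v_0\|_\infty\,|\lambdaeps|(\Omega)\to0$, combining the strong $C_0(\Omega)$ convergence with the uniform mass bound, and the second tends to $0$ by weak-* convergence of $\lambdaeps$ tested against $v_0\in C_0(\Omega)$. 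Passing to the limit therefore gives $\int_\Omega u_* g\,dx=\int_\Omega v_0\,d\lambda$ for every $g\in\linfty$, which is precisely the assertion that $u_*$ is the Stampacchia solution of \eqref{eq:homeqn}. Uniqueness of the Stampacchia solution forces $u_*=u_0$, and since this limit is independent of the subsequence, the full family converges, $\ueps\rightharpoonup u_0$ weakly in $\wlq$.

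The step I expect to be most delicate is reconciling the two modes of convergence of the adjoint solutions $\{\veps\}$: one must be sure that the weak $\hoz$-limit produced by $G$-convergence coincides with the strong uniform limit produced by the H\"older bound. This hinges on the transpose property of $G$-convergence, namely that $A_\ve^t$ $G$-converges to $A_0^t$, which is a standard fact of $H$-convergence theory (cf.~\cite{murtar,tartar}) and is automatic for symmetric coefficients. Once the uniform mass bound $\sup_\ve|\lambdaeps|(\Omega)<+\infty$ and the uniform H\"older estimate \eqref{eq:holder} are in place, the remaining passages to the limit are routine.
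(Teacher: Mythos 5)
Your proposal is correct and follows essentially the same route as the paper: uniform mass bound on $\lambda_\ve$ plus the estimate \eqref{eq:qesti} to get weak compactness of $u_\ve$ in $\wlq$, uniform convergence of the adjoint states $v_\ve$ via the H\"older estimate \eqref{eq:holder} combined with $G$-convergence to identify the limit $v_0$, and passage to the limit in the duality identity \eqref{eq:nsc}. The only difference is that you spell out the splitting $\int(v_\ve-v_0)\,d\lambda_\ve+\int v_0\,d(\lambda_\ve-\lambda)$, which the paper delegates to a citation of Malusa--Orsina.
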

\begin{proof}
We have $\lambdaeps$ bounded in $\borelmeas$, since $\lambdaeps$ converges
weak-* in $\borelmeas$. Thus, by \eqref{eq:qesti}, there is a constant $C_0$ (depending
on $n, \alpha$ and $\Omega$, and independent of $\{A_\ve\}$) such that,
\[
\|\ueps\|_{\wlq} \le C_0, \quad \forall \allq.
\]
 Consequently, there exists a subsequence (still denoted by
$\ve$), such that $\ueps \rightharpoonup u_0$ weakly in $\wlq$ for
$\allq$. For each $\ve$, we have by the definition of Stampacchia
solution, 
\begin{equation}\label{eq:varnc}
\int_\Omega \ueps g \,dx = \int_\Omega \veps \,d\lambdaeps, \quad \forall
g\in\linfty,
\end{equation}
where $\veps$ solves
\begin{equation}\label{eq:varadjeqn}
\left\{ \begin{array}{rll}
-\textup{div}({A^t_{\ve}}(x)\nabla \veps) &  =  g & \textup{ in }\Omega\\
\veps &  =  0  & \textup{ on }\partial\Omega.\\
\end{array}\right.
\end{equation}
By the equi-continuity of the sequence $\veps$ guaranteed by the
uniform H\"{o}lder estimate \eqref{eq:holder}, $\veps$ converges uniformly in $\Omega$ to
some $v_0$ (cf. \cite[section 1]{malusaorsina}) and by the theory of $G$-convergence $v_0$ is the unique solution of 
\begin{equation}\label{eq:homadjeqn}
\left\{ \begin{array}{rll}
-\textup{div}({A^t_0}(x)\nabla v_0) &  =  g & \textup{ in }\Omega\\
v_0 &  =  0  & \textup{ on }\partial\Omega.\\
\end{array}\right.
\end{equation}
One can pass to the limit in the right side of \eqref{eq:varnc},
\[
\lim_{\ve\rightarrow 0}\int_\Omega\veps\,d\lambdaeps =
\int_\Omega v_0\,d\lambda.
\]
Thus,
\[
\int_\Omega u_0 g \,dx = \int_\Omega v_0 \,d\lambda, \quad \forall
g\in\linfty.
\]
Therefore, by Definition~\ref{def:stmpsoln}, $u_0$ is the
Stampacchia solution of \eqref{eq:homeqn}.
\end{proof}

\section{Optimal Measures}\label{sec:optmsr}

Let us fix a $\allr$.
The motivation for the choice of range for $r$ is due to
the fact that there exists a $\allq$ such that $\wlq$ is compactly imbedded
in $L^r(\Omega)$, for $\allr$. For any given constant $k>0$, let $U$ be the set of all
$\ltoo$ functions such that their $L^1$-norms are bounded, i.e.,
\[
U =\left\{\theta\in\ltoo\mid \|\theta\|_1 \le k \right\}.
\]
Observe that $U$ is closed and convex in $\ltoo$. Given
$\theta\in U$ and $\{A_\ve\}\subset\mabomega$, we are interested in the asymptotic behaviour of the cost functional $\jeps$ defined as,
\begin{equation}\label{eq:ucstfn}
\jeps(\theta)=
\|\ueps(\theta)\|^r_r+ \ve\|\theta\|^2_2, 
\end{equation}
where $\ueps\in \hoz$ is the unique solution of
\begin{equation}\label{eq:state}
\left\{ \begin{array}{rll}
-\textup{div}(A_\ve\nabla \ueps) &  =  f + \theta & \textup{in $\Omega$}\\
\ueps &  =  0  & \textup{on $\partial\Omega$.}
\end{array}\right.
\end{equation}
For a fixed $\ve$, $\ueps \in L^r(\Omega)$ and
$\|\ueps(\theta)\|_r^r$, for all $\allr$, is continuous  as
function of $\theta$ for the weak topology in $\ltoo$. Thus, $\jeps$ is weakly
lower semicontinuous and is strictly convex. Therefore, there exists a
unique $\optcon\in U$ which minimizes $\jeps$ in $U$. Given any
$\theta\in\ltoo$, we associate with it a measure $\mu$, defined as follows:
\begin{equation}\label{eq:conversion}
\mu(\omega)=\int_\omega\theta\,dx, \quad \forall \text{ Borel set
}\omega \text{ in }\Omega
\end{equation}
and the state $\ueps\in\hoz$ is the Stampacchia solution of
\begin{equation}\label{eq:statemeas}
\left\{ \begin{array}{rll}
-\textup{div}(A_\ve\nabla \ueps) &  =  f + \mu & \textup{in $\Omega$}\\
\ueps &  =  0  & \textup{on $\partial\Omega$.}
\end{array}\right.
\end{equation}
Let $\optmu$ denote the measure corresponding to $\optcon$.
Since $U$ is bounded in $L^1(\Omega)$, by Banach-Alaoglu theorem,
there exists a measure $\mu^*\in\borelmeas$ such that, for a
subsequence, $\optmu$ weak-* converges to $\mu^*$ in $\borelmeas$. Let $V$ be the weak-* closure of $U$ in
$\borelmeas$. For any $\lambda\in V$, we define the functional $J:
V \to \mathbb{R}\cup\{-\infty,+\infty\}$, as
 \begin{equation}\label{eq:ulim}
J(\lambda) = \|u_0\|^r_r,
\end{equation}
where the state $u_0$ is the Stampacchia solution of
\begin{equation}\label{eq:statelim}
\left\{ \begin{array}{rll}
-\textup{div}(A_0\nabla u_0) &  =  f + \lambda & \textup{in $\Omega$}\\
u_0 &  =  0  & \textup{on $\partial\Omega$.}
\end{array}\right.
\end{equation}
We now extend our functionals $\jeps$ and $J$ to the space of
Borel measures with finite variations, $\borelmeas$, in the following
way:
\[
F_\ve(\mu)=\left\{
\begin{array}{cl}
\jeps(\theta) & \text{ if } \mu=\theta\,dx, \theta\in U\\
+\infty & \text{ otherwise }
\end{array}
\right.
\]
and
\[
F(\lambda)=\left\{
\begin{array}{cl}
J(\lambda) & \text{ if } \lambda\in V\\
+\infty & \text { if } \lambda\in \borelmeas\setminus V.
\end{array}
\right.
\]
\begin{thm}\label{thm:measgamma}
$F_\ve$ $\Gamma$-converges to $F$ in the weak-* topology of
$\borelmeas$. Furthermore, $\mu^*$ is the minimizer of $F$ and
$F_\ve(\optcon) \rightarrow F(\mu^*)$.
\end{thm}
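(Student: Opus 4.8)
The plan is to establish the two conditions (i)--(ii) characterising the sequential $\Gamma$-limit in the weak-* topology of $\borelmeas$ and then to invoke Theorem~\ref{thm:gamma}; throughout I assume, as is implicit in the definition of $F$, that $\{A_\ve\}$ $G$-converges to $A_0$, so that Theorem~\ref{thm:asybeh} is available. The single fact I shall use repeatedly is the following reduction: by Theorem~\ref{thm:asybeh} together with the compact imbedding $\wlq\hookrightarrow\hookrightarrow\lr$ recorded at the start of this section, \emph{any} sequence $\theps\in U$ with $\theps\,dx$ weak-* convergent to $\mu$ in $\borelmeas$ produces states $\ueps(\theps)$ converging strongly in $\lr$ to the Stampacchia solution $u_0$ of \eqref{eq:statelim} with $\lambda=\mu$; hence $\|\ueps(\theps)\|_r^r\to\|u_0\|_r^r=J(\mu)$. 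This isolates the low-cost term $\ve\|\theps\|_2^2$ as the only quantity requiring genuine control.

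\textbf{The liminf inequality.} Let $\mu_\ve$ weak-* converge to $\mu$ in $\borelmeas$. If $\liminf_\ve F_\ve(\mu_\ve)=+\infty$ there is nothing to prove, so I pass to a subsequence (not relabelled) realising the liminf with a finite value; along it $\mu_\ve=\theps\,dx$ with $\theps\in U$, in particular $\|\theps\|_1\le k$. The data of \eqref{eq:statemeas} is then $f\,dx+\mu_\ve$, which weak-* converges to $f\,dx+\mu$, so the reduction gives $\|\ueps(\theps)\|_r^r\to J(\mu)$. As $\mu$ is a weak-* limit of elements of $U$ it lies in $V$, whence $F(\mu)=J(\mu)$. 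Discarding the nonnegative term $\ve\|\theps\|_2^2$ yields
\[
\liminf_{\ve\to 0}F_\ve(\mu_\ve)\ \ge\ \lim_{\ve\to 0}\|\ueps(\theps)\|_r^r\ =\ J(\mu)\ =\ F(\mu),
\]
which is condition (i).

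\textbf{The limsup inequality.} Fix $\mu\in\borelmeas$; only $\mu\in V$ requires attention, since otherwise $F(\mu)=+\infty$. Here $V$ is the weak-* closure of $U$, and in fact $V=\{\lambda\in\borelmeas:|\lambda|(\Omega)\le k\}$ by the weak-* lower semicontinuity of the total variation. I build the recovery sequence by mollification and a diagonal selection. For $\delta>0$ let $\mu_\delta=\rho_\delta*\mu$, the mollification of $\mu$ extended by zero and restricted to $\Omega$; then $\mu_\delta\in L^2(\Omega)$ with $\|\mu_\delta\|_1\le|\mu|(\Omega)\le k$, so $\mu_\delta\in U$, and $\mu_\delta\,dx$ weak-* converges to $\mu$ as $\delta\to 0$. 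Writing $M(\delta)=\|\mu_\delta\|_2<\infty$, the only obstruction is that $M(\delta)\to\infty$ when $\mu$ is singular; this is exactly where the low-cost weight is decisive, since for each fixed $\delta$ one has $\ve M(\delta)^2\to 0$ as $\ve\to 0$ while $\mu_\delta\to\mu$ as $\delta\to 0$. A diagonal argument (Attouch's lemma, using the weak-* metrizability of the bounded set $\{|\lambda|\le k\}$) then furnishes $\delta(\ve)\to 0$ for which $\theps:=\mu_{\delta(\ve)}$ satisfies simultaneously $\theps\,dx$ weak-* convergent to $\mu$ and $\ve\|\theps\|_2^2\to 0$. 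For this $\theps\in U$ the reduction gives $\|\ueps(\theps)\|_r^r\to J(\mu)$, so that
\[
\limsup_{\ve\to 0}F_\ve(\theps)\ =\ J(\mu)+0\ =\ F(\mu),
\]
which is condition (ii).

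\textbf{Conclusion and main obstacle.} Conditions (i) and (ii) give $F_\ve\to F$ in the $\Gamma$-sense for the weak-* topology. Since $F_\ve(\mu)=+\infty$ unless $\mu=\theta\,dx$ with $\theta\in U$, minimising $F_\ve$ over $\borelmeas$ coincides with minimising $\jeps$ over $U$; thus $\optmu=\optcon\,dx$ is the unique minimiser of $F_\ve$. As $\optmu$ weak-* converges to $\mu^*$ along a subsequence, Theorem~\ref{thm:gamma} shows that $\mu^*$ minimises $F$ and that $F_\ve(\optcon)\to F(\mu^*)$. The crux of the argument is the recovery sequence: the limit control $\mu$ may be a genuinely singular measure, unreachable in $L^2$ with bounded norm, so the construction must balance the blow-up of $\|\mu_\delta\|_2$ against the vanishing weight $\ve$ via the diagonalisation above; the liminf half is comparatively routine once Theorem~\ref{thm:asybeh} and the compact imbedding are in hand.
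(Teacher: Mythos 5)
Your proof is correct, and the liminf half is essentially identical to the paper's: reduce to $\mueps=\theps\,dx$ with $\theps\in U$, drop the nonnegative penalty, and use Theorem~\ref{thm:asybeh} plus the compact imbedding $W^{1,s}_0(\Omega)\hookrightarrow\lr$ (for $s$ with $r<s^\star$) to get $\|\ueps\|_r^r\to J(\mu)=F(\mu)$. Where you genuinely diverge is the $\limsup$ inequality. The paper first notes that for $\theta\in U$ the \emph{constant} sequence $\theps=\theta$ works (since $\ve\|\theta\|_2^2\to 0$ trivially), giving $F^+\le F$ on $U$, and then extends this bound to all of $V$ abstractly, using the lower semicontinuity of the $\Gamma$-upper limit $F^+$ together with the density of $U$ in $V$ and the continuity of $F$ along weak-* convergent sequences (itself again Theorem~\ref{thm:asybeh}). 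You instead build an explicit recovery sequence for an arbitrary $\mu\in V$ by mollification, $\theps=\rho_{\delta(\ve)}*\mu$, with a diagonal selection of $\delta(\ve)$ (using weak-* metrizability on the total-variation ball) so that $\ve\|\theps\|_2^2\to 0$ despite the possible blow-up of $\|\mu_\delta\|_2$ for singular $\mu$. Both routes rest on the same two ingredients (Theorem~\ref{thm:asybeh} and the compact imbedding); the paper's is shorter and sidesteps the diagonalization by invoking general $\Gamma$-limit machinery, while yours is more self-contained, exhibits the recovery sequence concretely, and makes explicit where the vanishing weight $\ve$ is actually needed. Two minor remarks: your identification $V=\{\lambda:|\lambda|(\Omega)\le k\}$ is more than you need (only $V\subseteq\{|\lambda|(\Omega)\le k\}$, which follows from weak-* lower semicontinuity of the total variation, is used to ensure $\mu_\delta\in U$); and in the final step both you and the paper apply Theorem~\ref{thm:gamma} along the subsequence on which $\optmu$ converges, which is the standard and intended reading.
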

\begin{proof}
Let $\mueps$ be a sequence weakly-* converging to $\mu$ in
$\borelmeas$. It is enough to consider the case when $\mu\in V$
and $\mueps\in U$, for infinitely many $\ve$, as the
other cases correspond to the trivial situation ($\liminf
F_\ve(\mueps)$ is infinite). Let $\theps$ be the
function associated with $\mueps$ as given in
\eqref{eq:conversion}. We have,
\begin{eqnarray*}
\liminf_{\ve\to 0}F_\ve(\mueps) & = & \liminf_{\ve\to
0}\left[\|\ueps\|^r_r + \ve\|\theps\|_2^2\right]\\
& \ge & \liminf_{\ve\to 0}\|\ueps\|^r_r. 
\end{eqnarray*}
For every $\allq$, $\{\ueps\}$ is bounded in $\wlq$. Therefore,
one can always choose a $s\in [1,1^\star)$ such that $r <
s^\star$. Thus, by Sobolev Imbedding, $W_0^{1,s}(\Omega)$ is compactly imbedded in $\lr$. Hence, there exists a $u_0$ such that, for a subsequence,
$\ueps$ strongly converges to $u_0$ in $\lr$.
Therefore, we have,
\[
\liminf_{\ve\to 0}F_\ve(\mueps)  \ge  \|u_0\|^r_r.
\] 
Also, by Theorem~\ref{thm:asybeh}, we have that $u_0$ is a
Stampacchia solution of \eqref{eq:statelim} corresponding to
$\mu$. Hence,
\[
\liminf_{\ve\to 0}F_\ve(\mueps)  \ge  F(\mu).
\]

It now only remains to prove the $\limsup$-inequality. To this
aim, we first prove the $\limsup$-inequality in $U$ and extend the
result to all of $V$ using the density of $U$ in $V$.

Let $F^+:= \Gamma\text{-}\limsup_{\ve\to 0}F_\ve$ in the weak-*
topology of $\borelmeas$. Given $\theta\in U$, we choose the constant
sequence $\theps=\theta$ in $U$. Thus,
\begin{eqnarray*}
F^+(\theta) \le \limsup_{\ve\to 0}F_\ve(\theps)  =
\limsup_{\ve\to 0}F_\ve(\theta)
& = & \limsup_{\ve\to 0}\left[\|\ueps\|^r_r + \ve\|\theta\|^2_2\right]\\
& = & \|u_0\|^r_r. 
\end{eqnarray*}
Therefore,
\begin{equation}\label{eq:inu}
F^+(\theta) \le F(\theta), \quad \forall \theta\in U.
\end{equation}
For any $\lambda\in V\setminus U$, there exists a sequence of
$\{\lambda_m\}\subset U$ such that $\lambda_m$ weak-* converges to
$\lambda$ in $\borelmeas$, as $m\to \infty$.
to�
Consider,
\begin{eqnarray*}
F^+(\lambda) & \le & \liminf_{m\to\infty}
F^+(\lambda_m) \qquad (\text{by the l.s.c of } F^+)\\
& \le & \lim_{m\to\infty} F(\lambda_m) \qquad (\text{by } \eqref{eq:inu})\\
& \le & F(\lambda) \qquad (\text{by the continuity of } F)
\end{eqnarray*}
Thus, we have \eqref{eq:inu} for all $\lambda\in V$ and hence,
the $\limsup$-inequality is proved. Therefore, $F_\ve$
$\Gamma$-converges to $F$. Moreover, by Theorem~\ref{thm:gamma},
$\mu^*$ is the minimizer of $F$ and $F_\ve(\optcon) \rightarrow F(\mu^*)$.
\end{proof} 
\begin{rmrk}
A consequence of Theorem~\ref{thm:measgamma} is that,
\[
\lim_{\ve\to 0}\ve\|\optcon\|^2_2 = \lim_{\ve\to
0}\left(F_\ve(\optcon)- \|\optstat\|^r_r\right) = 0.
\]
Thus, $\ve^{\frac{1}{2}}\optcon \rightarrow 0$ strongly in $\ltoo$.
\end{rmrk}\qed

\section*{Conclusion}
A question of interest is the regularity of the optimal controls
$\theta^*$ and $\mu^*$. To be precise, it would be of interest to
obtain conditions on $U$ or on the coefficients under which
$\theta^*$ or $\mu^*$ are in $\ltoo$.

The asymptotic behaviour of the optimal control problems given by
\eqref{eq:bcstfn}--\eqref{eq:gensteqn} has been proved for an
arbitrary control set in $\ltoo$, for the periodic case. A
question of further interest is whether these results can be
generalized to the non-periodic case too.
 
\section*{Acknowledgement} The authors wish to thank Guy
Bouchitt\'{e} for some fruitful discussions. The first author was
supported by FONDECYT Project No.3090052.

\bibliographystyle{h-elsevier2.bst}
\bibliography{reference}
\end{document}